\newcommand{\ts}{\hspace{0.5pt}}
\newcommand{\PP}{\mathbb{P}}
\newcommand{\NN}{\mathbb{N}}
\newcommand{\one}{\mathbbm{1}}
\newcommand{\cO}{\mathcal{O}}
\newcommand{\cL}{\mathcal{L}}
\newcommand{\cS}{\mathcal{S}}
\newcommand{\btimes}{\mbox{\huge \raisebox{-0.2ex}{$\times$}}}
\begin{document}

\title{Single-crossover recombination and ancestral recombination trees
}

\titlerunning{Ancestral recombination trees}        

\author{Ellen Baake         \and
        Ute von Wangenheim 
}


\institute{Ellen Baake \at
           Faculty of Technology \\
           Bielefeld University \\
           33594 Bielefeld \\
              Tel.: +49-521-1064896\\
              Fax: +49-521-1066411\\
              \email{ebaake@TechFak.Uni-Bielefeld.DE}           
           \and
           Ute von Wangenheim \at
             Faculty of Technology \\ Bielefeld University \\
             33594 Bielefeld \\
             \email{uvonwang@TechFak.Uni-Bielefeld.DE}  
}

\date{Received: date / Accepted: date}

\maketitle

\begin{abstract}
We consider the Wright-Fisher model for a population of $N$ individuals,
each identified with a sequence of a finite number of  sites, and
single-crossover recombination between them. We trace back the ancestry of
single individuals from the present population. In the $N \to \infty$ limit
without rescaling of parameters or time,
this ancestral process is described by a  random tree, whose
branching events correspond to the splitting of the sequence due to
recombination. With the help of a decomposition of the trees into subtrees,
 we calculate the probabilities of the topologies of the ancestral trees. At the
same time, these probabilities lead to a semi-explicit solution of
the deterministic single-crossover equation. The latter is
a discrete-time dynamical system that emerges from
the Wright-Fisher model via a law of large numbers 
and has been waiting for a solution for many decades.
\keywords{population genetics \and recombination \and segmentation process 
\and ancestral trees \and subtree decomposition}
 \subclass{MSC 92D10 \and MSC 60J28}
\end{abstract}

\section{Introduction}
\label{intro}
Recombination happens
during sexual reproduction and refers to the combination of the genetic
material of two parents into the `mixed' type of an offspring individual.
More precisely, the recombined offspring results from
a reciprocal exchange of maternal and paternal gene sequences 
via so-called {\em crossovers}.
Due to the interaction of individuals and due to dependencies between
the positions at which recombination may take place, the
process is difficult to handle. This applies even to the limit of infinite
population size, where a law of large numbers turns
the dynamics of gene frequencies into a deterministic, nonlinear
system of difference or differential equations, which has challenged
population geneticists since its first formulation by Geiringer
in 1944; see 
\cite{Bennett, HaleRingwood, Kevin1, Kevin2}; and \cite{reco} for a sample of subsequent work.
Under the so-called single-crossover assumption, 
where at most one crossover occurs in any gene sequence in every generation,
the deterministic model can be solved explicitly (and in an
astonishingly simple way) in \emph{continuous} time \citep{reco}. But the corresponding
discrete-time dynamics, which is prevalent in the biological
literature, is more difficult; its solution has, so far,
required nontrivial transformations and recursions that have not yet been
solved in closed form  \citep{Bennett, Kevin1, Kevin2, discretereco}.

In this paper, we will present a \emph{semi-explicit} solution to the
\emph{discrete-time single-crossover population model} by considering
the \emph{ancestry of single individuals}. The original
\emph{deterministic forward-time dynamics} is thus considered in terms of
a \emph{stochastic process backward in time}, whose solution leads 
 to that of the original system.  In the backward process, one gains
a certain conditional independence of  gene segments, which will allow for a
solution. In this sense, a probabilistic
representation provides the necessary understanding to solve the original
deterministic problem. 

More precisely, we proceed as follows.
In Section~\ref{sec:WFmodel}, we start from the stochastic (i.e., finite-population)
version of the discrete-time single-crossover model, that is, the
Wright-Fisher model with single-crossover recombination 
\cite[Chap. 5.4]{HeinSchierupWiuf}. In the limit
of population size tending to infinity (without rescaling of parameters or time),
 a law of large numbers (established here
explicitly) leads to the corresponding deterministic dynamical
system. We recall some general properties of this system and
discuss the various dependencies (between individuals and between
gene segments) that have, so far, obstructed an explicit solution.
In Section~\ref{sec:ancestral}, we take the backward point of view and consider the ancestry
of the genetic material of single individuals. In the
limit of infinite population size, this ancestry is a random tree
for any finite time horizon,
that is, segments that have been separated once do not come together again in
the same individual (with probability one).
The law for this ancestral tree may be formulated explicitly in terms of
a (stochastic) segmentation process, which involves conditional
independence between segments once they appear. As a consequence,
the time evolution of the ancestral process may be calculated 
via a decomposition into subtrees. This solution is semi-explicit in
the sense that it is a sum of well-defined terms, where summation is
over certain tree topologies, which must be enumerated in a recursive way.
In the same sense, this yields a  solution of the deterministic
forward-in-time model. We will discuss our results in the context
of related approaches in Section~4,
in particular, the ancestral recombination graph (the usual approach to
recombination in finite populations).

\section{The recombination model forward in time}\label{sec:WFmodel}
\subsection{The model}
In this section, we  describe the basic setting, the 
\emph{Wright-Fisher model
with single--crossover recombination},
as well as the dynamical system (from \citealt{discretereco}) that arises
as its infinite-population limit. 
A chromosome is described by a linear arrangement of, say, $n+1$ {\em sites},
namely, the elements of the set $S:=\lbrace 0, 1 , \ldots , n \rbrace$.
Sites represent discrete positions on a chromosome that may be interpreted
as gene or nucleotide positions. Thus, each site $i\in S$ can be occupied by
an {\em allele} (or \emph{letter}) $x^{}_i\in X^{}_i$, where 
we restrict ourselves 
 to {\em finite} $X^{}_i$.
A {\em type} $x$ is then defined as a sequence 
$x = (x^{}_0, x^{}_1 ,\ldots ,x^{}_n)\in X^{}_0\times X^{}_1\times\cdots\times X^{}_n =:X$,
where $X$ denotes the (finite) \emph{type space}.
Neighbouring sites are connected by {\em links}, the entities where recombination events may occur.
They are collected into the set 
$L=\lbrace{\frac{1}{2}, \frac{3}{2}, \ldots ,\frac{2n-1}{2}\rbrace}$, where
link $\alpha=\frac{2i+1}{2}$ denotes the link between sites $i$ and $i+1$.
We will only be concerned with single crossovers, i.e., the case where recombination occurs at
a single link $\alpha\in L$ and results in 
a mixed type composed of the sites before $\alpha$ from the first parent,
and those after $\alpha$ from the second parent.
Explicitly, if recombination involves the ordered pair of types
$x = (x^{}_0, \ldots, x^{}_n)$ and $y=(y^{}_0, \ldots, y^{}_n)$, the outcome 
of recombination at  link $\frac{2i+1}{2}$
is the recombined type $(x^{}_0, \ldots, x^{}_i, y^{}_{i+1}, \ldots, y^{}_n)$. 
The dynamics of a finite population that evolves under single-crossover recombination
can be described by the following version of the Wright-Fisher model (cf.\ 
\citealt[Chap. 5.4]{HeinSchierupWiuf}): 

\label{wf-scr}
Each link is equipped with a crossover probability $\varrho_{\alpha}>0$
(with $\sum_{\alpha \in L} \varrho_{\alpha} \leqslant 1$).
Each generation is  of constant size $N$. 
In each generation, the current population is replaced by its offspring,
where each offspring individual chooses
its parent(s) independently  according to the following scheme
(see Figure~\ref{fig:wright}):
\begin{itemize}
\item With probability $\varrho^{}_{\alpha}>0$, $\alpha\in L$,
two parents are chosen uniformly with replacement. They recombine at link 
$\alpha$, which gives rise
to the corresponding recombined offspring with the leading segment 
(the sites $0, \ldots, \lfloor \alpha\rfloor$) from the first and
the trailing segment (the sites $\lceil\alpha\rceil,\ldots, n$)
from the second parent, where $\lfloor\alpha\rfloor$ ($\lceil\alpha\rceil$) 
denotes the largest integer below (the smallest above) $\alpha$; 
if the same parent is chosen
twice, it is effectively transmitted unchanged.
 \item With probability $0\leqslant 1-\sum_{\alpha\in L}\varrho^{}_{\alpha}<1$, a single parent
is selected uniformly and with replacement from the previous generation.
\end{itemize}

\begin{figure}
\begin{center} 
  \includegraphics[width=10cm,height=4.5cm]{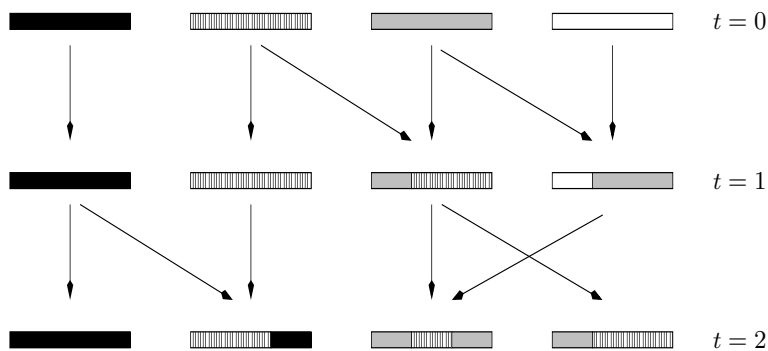}
  \end{center}  
\caption{Wright-Fisher model with single--crossover recombination  for $N=4$.}
\label{fig:wright}
\end{figure}
We denote the population at time $t$ 
by 
\[
Z^{}_t = (Z^{}_t(x))_{x\in X}\in E:=\{ \nu \text{ counting measure on }
X \mid \lVert \nu \rVert =N  \},
\]
where $\lVert . \rVert$ denotes total variation norm
and $Z^{}_t(x)$ is the number of individuals of type $x$ at time $t$.
We will also need the corresponding normalised quantity
$\widehat{Z}_t :=Z^{}_t/N$, which is a probability vector (or measure)
for the  population at time $t$.
In order to formalise the stochastic process,
let $\pi^{}_J \! : X \longrightarrow X^{}_J := \btimes_{i\in J}X^{}_i$,
$\pi^{}_J(x) =(x^{}_i)^{}_{i\in J} =:x^{}_J$,
be the canonical projection to the sites in $J$ ($J\subseteq S$).
We specifically need 
$\pi^{}_{<\alpha} :=   \pi^{}_{\lbrace0,\ldots,\left\lfloor \alpha\right\rfloor\rbrace}$ and
$\pi^{}_{>\alpha} := \pi^{}_{\lbrace\left\lceil \alpha\right\rceil,\ldots,n\rbrace}$.
For $p\in \mathcal{P}(X)$, the set of probability measures on $X$,
we denote by $\pi^{}_{J\cdot}p := p\circ\pi^{-1}_{J}$
(where $\pi^{-1}_{J}$ 
denotes the preimage of $\pi^{}_{J}$) 
the marginal distribution of $p$ with respect to the sites in $J$.
Furthermore, 
\begin{equation}\label{eq:recombinator}
  R_{\alpha}(p) := (\pi^{}_{<\alpha\cdot}p)\otimes(\pi^{}_{>\alpha\cdot} p) 
\end{equation}
is the product measure of the two marginals (before and after $\alpha$);
$R^{}_{\alpha}$ is known as the recombination operator (or \emph{recombinator} for short), cf. \cite{reco}. 
It is clear that an individual that recombines at link $\alpha\in L$ in generation $t$ draws its
type from $R^{}_{\alpha}(\widehat{Z}^{}_{t-1})$, and
a non-recombining individual draws its type from 
 $\widehat{Z}^{}_{t-1} =R^{}_{\varnothing}(\widehat{Z}^{}_{t-1})$,
with $R^{}_{\varnothing} := \one$ (the reason for this notation will
become clear later).

The discrete-time Markov chain $\{\widehat{Z}^{}_t\}_{t\in\mathbb{N}_0}$  on $\mathcal{P}(X)$
may therefore be formulated as follows:
\begin{itemize}
\item Let $N^{}_{\alpha}(t)$, $\alpha\in L$, 
denote the random number of  individuals
generated in generation $t$ via recombination at link $\alpha\in L$.
Analogously, $N^{}_{\varnothing}(t)$ is the number of individuals that are
sampled without recombining. Clearly, they follow a multinomial distribution:
\begin{equation}\label{stoch1}
 (N^{}_{\varnothing}(t),N^{}_{\frac{1}{2}}(t),\ldots, N^{}_{\frac{2n-1}{2}}(t))
 \sim \mathcal{M}(N,(1-\sum_{\alpha\in L}\varrho^{}_{\alpha}, \varrho^{}_{\frac{1}{2}},\ldots,\varrho^{}_{\frac{2n-1}{2}})) \, ,
 \quad \text{i.i.d for all } t \, .
\end{equation}
\item 
According to the previous step, $Z_t$ consists of subpopulations
$Y_{\beta}(t)$, 
$\beta \in L \cup \{\varnothing\}$, 
where $Y^{}_{\beta}(t)$ consists   of those individuals that, in generation $t$,
experience recombination at
$\beta$ (where  $\beta=\varnothing$ indicates no recombination).
Clearly,
\begin{equation}\label{stoch2}
 Y^{}_{\beta}(t) \sim \mathcal{M}(N_{\beta}(t),R^{}_{\beta}(\widehat{Z}_{t-1})), \quad \beta\in L\cup \{\varnothing\}.
\end{equation}
\item Finally, we obtain $\widehat{Z}^{}_{t}$
via 
\begin{equation}\label{stoch3}
 \widehat{Z}^{}_{t} = \frac{1}{N}(Y^{}_{\varnothing}(t) + \sum_{\alpha\in L} Y^{}_{\alpha}(t)) \, .
\end{equation}
\end{itemize}

Obviously, the resampling-recombination mechanism is  independent of the 
types. So, the Wright-Fisher model   
may, alternatively, be constructed as an independent superposition 
of the two processes, that is,
\begin{enumerate}
\item[{\bf (F1)}] It is first determined, for each time point
and for each individual,
which of the sites come from which parental individual 
(resampling/recombination without types).
\item[{\bf (F2)}]
Letters are then attached to the sites at time $t=0$ 
and are then propagated through the model to time $t$
according to the relations 
decided in {\bf (F1)}.
\end{enumerate}

\subsection{Law of large numbers.}
Let us first consider the Wright-Fisher model in the so-called
{\em infinite population limit (IPL)}, where we let $N\rightarrow\infty$ 
without rescaling any other parameters.
This may be considered as a limit of \emph{strong recombination},
in which the stochastic effects of resampling (also known as genetic
drift) are lost. This is in contrast to
 the more frequently used weak recombination
limit, which leads to a diffusion process, compare \citet[Chap.~6.6]{Ewens}
and Section~\ref{sec:discussion} below.

More precisely, we consider the family of processes
$\{\widehat{Z}^{(N)}_t\}_{t\in\mathbb{N}_0}$, $N \in \NN$
(where we temporarily add an upper index $N$
to denote population size)
and compare it with the deterministic recombination dynamics,
where we identify
the  population at time $t\in \mathbb{N}_0$  with
$p^{}_t= (p_t(x))_{x\in X}\in\mathcal{P}(X)$.
Here $p^{}_t(x)$ denotes 
the relative frequency of type $x\in X$ at time $t$, and 
$p^{}_0$ is the initial population.
The population is described by the dynamical system
\begin{equation} \label{rekooper}
    p^{}_{t}  =  \varPhi(p_{t-1}), \quad \text{where } \,
      \varPhi(p) :=  \Big (1-\sum_{\alpha\in L}\varrho^{}_{\alpha} \Big ) p + 
   \sum_{\alpha\in L} \varrho^{}_{\alpha}  R_{\alpha}(p) \, ,
\end{equation}
which is usually obtained by direct deterministic modelling \citep{discretereco}.
The following result shows that, indeed, \eqref{rekooper} describes the  infinite population limit
of the stochastic process, more precisely for the family of processes
$\{\widehat{Z}^{(N)}_t\}_{t\in\mathbb{N}_0}$, $N \in \NN$. We will
use $\varPhi^{t+1}=\varPhi \circ \varPhi^t$ for the composition
of the nonlinear mapping $\varPhi$.

\begin{proposition}[Infinite Population Limit]\label{prop:IPL}
Let $\{\widehat{Z}^{(N)}_t\}_{t\in\mathbb{N}_0}$ with $N \in \NN$ be a family 
of Wright-Fisher models with single-crossover recombination
(as  defined by
\eqref{stoch1}--\eqref{stoch3}) with initial states such that
$\lim_{N\rightarrow\infty} \widehat{Z}_0^{(N)} = p^{}_0$. Then, for every given $t\in\mathbb{N}_0$, 
one has
\begin{equation}\label{IPLconv}
\lim_{N\rightarrow\infty} \widehat{Z}_t^{(N)} = p^{}_t \quad
\text{in mean square},
\end{equation}
where $p^{}_t =\varPhi^t(p^{}_0)$ denotes the  solution of \eqref{rekooper}.
\end{proposition}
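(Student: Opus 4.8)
The plan is to prove the convergence by induction on $t$, using the recursive structure \eqref{stoch1}--\eqref{stoch3} together with \eqref{rekooper}. The base case $t=0$ is the hypothesis $\widehat{Z}_0^{(N)}\to p^{}_0$ in mean square (or in probability together with boundedness, since everything lives in the compact set $\mathcal{P}(X)$, so the mode of convergence can be upgraded to mean square for free). For the inductive step, I would assume $\EE\lVert \widehat{Z}_{t-1}^{(N)} - p^{}_{t-1}\rVert^2 \to 0$ and split the error at time $t$ into a ``fluctuation'' term and a ``bias'' term:
\begin{equation*}
\widehat{Z}^{(N)}_t - p^{}_t = \big(\widehat{Z}^{(N)}_t - \EE[\widehat{Z}^{(N)}_t \mid \widehat{Z}^{(N)}_{t-1}]\big) + \big(\EE[\widehat{Z}^{(N)}_t \mid \widehat{Z}^{(N)}_{t-1}] - p^{}_t\big).
\end{equation*}

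For the second (bias) term, note that conditionally on $\widehat{Z}^{(N)}_{t-1}$, the vector $(N^{}_{\varnothing}(t),N^{}_{\alpha}(t))$ is multinomial with the fixed probabilities from \eqref{stoch1}, and given those counts each $Y^{}_{\beta}(t)$ is multinomial with parameter $R^{}_{\beta}(\widehat{Z}^{(N)}_{t-1})$; taking expectations in \eqref{stoch3} and using $\EE[N^{}_{\beta}(t)] = N\varrho^{}_{\beta}$ gives exactly $\EE[\widehat{Z}^{(N)}_t \mid \widehat{Z}^{(N)}_{t-1}] = \varPhi(\widehat{Z}^{(N)}_{t-1})$. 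Since $\varPhi$ is a fixed polynomial map on the compact set $\mathcal{P}(X)$ it is Lipschitz in, say, total variation norm, so $\lVert \varPhi(\widehat{Z}^{(N)}_{t-1}) - \varPhi(p^{}_{t-1})\rVert \le C\,\lVert \widehat{Z}^{(N)}_{t-1} - p^{}_{t-1}\rVert$, and $\varPhi(p^{}_{t-1}) = p^{}_t$; squaring and taking expectations, this term vanishes by the induction hypothesis. For the first (fluctuation) term I would condition on $\widehat{Z}^{(N)}_{t-1}$ and bound the conditional second moment: it is a centred sum over the (conditionally independent, or at least negatively correlated) contributions of the $N$ offspring, each contributing $O(1/N)$ to $\widehat{Z}^{(N)}_t$, so a variance computation for multinomials — summing $\mathrm{Var}(N^{}_{\beta}(t))/N^2$ and the conditional variances of the $Y^{}_\beta$ — gives a conditional mean-square of order $1/N$ uniformly in $\widehat{Z}^{(N)}_{t-1}$ (uniformity is automatic since all the type-probabilities are bounded by $1$). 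Taking expectations kills this term as $N\to\infty$. Combining the two pieces via $\lVert a+b\rVert^2 \le 2\lVert a\rVert^2 + 2\lVert b\rVert^2$ (or by noting the cross term vanishes, since the fluctuation term is conditionally centred) completes the induction.

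The main obstacle, such as it is, is bookkeeping rather than conceptual: one must carefully set up the conditional structure so that $\EE[\widehat{Z}^{(N)}_t\mid\widehat{Z}^{(N)}_{t-1}] = \varPhi(\widehat{Z}^{(N)}_{t-1})$ falls out cleanly — in particular handling the nested randomness (first the multinomial split into the $N^{}_\beta(t)$, then the multinomial sampling of each $Y^{}_\beta$ from $R^{}_\beta(\widehat{Z}^{(N)}_{t-1})$) and verifying that the tower property gives the clean identity — and to get the variance bound one needs the elementary fact that a multinomial count has variance $O(N)$, so that dividing by $N^2$ yields $O(1/N)$. No large-deviation or martingale machinery is needed; the compactness of $\mathcal{P}(X)$ and the Lipschitz property of the polynomial map $\varPhi$ do all the analytic work, and the whole argument is a finite induction since $t$ is fixed.
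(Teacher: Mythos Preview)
Your proof is correct, and it is organised differently from the paper's. Both argue by induction on $t$, but the paper decomposes $\widehat Z^{(N)}_t$ as a sum of products $\frac{N^{(N)}_\beta(t)}{N}\cdot\frac{Y^{(N)}_\beta(t)}{N^{(N)}_\beta(t)}$ and applies the mean-square law of large numbers to each factor separately (the first factor tends to $\varrho_\beta$, the second to $R_\beta(p_{t-1})$ via the induction hypothesis and continuity of $R_\beta$), whereas you use the one-step conditional decomposition into fluctuation $\widehat Z^{(N)}_t-\EE[\widehat Z^{(N)}_t\mid\widehat Z^{(N)}_{t-1}]$ and bias $\varPhi(\widehat Z^{(N)}_{t-1})-\varPhi(p_{t-1})$. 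Your route is a little cleaner: the identity $\EE[\widehat Z^{(N)}_t\mid\widehat Z^{(N)}_{t-1}]=\varPhi(\widehat Z^{(N)}_{t-1})$ is exact, the fluctuation bound is immediate because, conditionally on $\widehat Z^{(N)}_{t-1}$, the $N$ offspring types are i.i.d.\ with law $\varPhi(\widehat Z^{(N)}_{t-1})$, and you avoid the paper's slightly awkward ``except on a set of measure~0 where $N^{(N)}_\beta(t)\nrightarrow\infty$'' caveat. The paper's approach, in exchange, makes the role of each recombination class $\beta$ visible at the level of the convergence itself rather than hiding it inside $\varPhi$.
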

The corresponding  situation in \emph{continuous} time and with 
\emph{almost sure convergence} (see Remark \ref{rem:interval})
is covered by the  general law of large numbers
of  \citet[Theorem 11.2.1]{EK}, but no such general 
result seems to be available in discrete time.
We therefore include a proof.
\begin{proof}[of Prop. \ref{prop:IPL}]
We employ induction over $t$.
By assumption, the claim holds for $t=0$. Now assume that
it holds for $t-1$, for some $t \geqslant 1$. 
We then have
\begin{equation}\label{facta}
 \frac{N_{\alpha}^{(N)}(t)}{N} \xrightarrow{ N \to \infty} \varrho^{}_{\alpha} \,, \alpha\in L, \quad \text{and}\quad
 \frac{N_{\varnothing}^{(N)}(t)}{N} \xrightarrow{ N \to \infty} 1- \sum_{\alpha\in L} \varrho^{}_{\alpha}  
\end{equation}
by the mean square law of large numbers
(cf.\ \citealt[Chap.~7.4]{Grimmett}).
Furthermore, for $\beta\in L$, $N_{\beta}^{(N)}(t)\rightarrow \infty$ 
as $N\rightarrow \infty$ with probability one (because $\varrho^{}_{\beta}>0$) 
and thus 
\begin{equation}\label{factb}
 \frac{Y_{\beta}^{(N)}(t)}{N_{\beta}^{(N)}(t)} 
  \xrightarrow{ N \to \infty} R^{}_{\beta}(p_{t-1}) \quad \text{in mean square,}
\end{equation}
since 
$Y_{\beta}^{(N)}(t)/N_{\beta}^{(N)}(t) - R^{}_{\beta}(\widehat{Z}^{(N)}_{t-1})
\xrightarrow{ N \to \infty} 0 $
due to the mean-square law of large numbers (except on the set of measure 0
where $N_{\beta}^{(N)}(t)\nrightarrow \infty$, and
thus altogether in mean square), and 
$R^{}_{\beta}(\widehat{Z}^{(N)}_{t-1})  \xrightarrow{ N \to \infty}
R^{}_{\beta}(p_{t-1})$ by the induction hypothesis.
Analogously, for $1-\sum_{\alpha\in L}\varrho^{}_{\alpha}>0$,
\begin{equation}\label{factc}
 \frac{Y^{(N)}_{\varnothing}(t)}{N^{(N)}_{\varnothing}(t)}
\xrightarrow{ N \to \infty} p_{t-1}
\quad\text{ in mean square}.
\end{equation}
Since, by \eqref{stoch3},
\[
 \widehat{Z}^{(N)}_{t} = 
\frac{N^{(N)}_{\varnothing}(t)}{N}\cdot \frac{Y^{(N)}_{\varnothing}(t)}{N^{(N)}_{\varnothing}(t)}
  + \sum_{\alpha\in L} \frac{N^{(N)}_{\alpha}(t)}{N} \cdot \frac{Y^{(N)}_{\alpha}(t)}{N^{(N)}_{\alpha}(t)},
\]
\eqref{facta}--\eqref{factc} together tell us that
\begin{equation}\label{factconseq}
  \widehat{Z}^{(N)}_{t}  \xrightarrow{ N \to \infty}
  \varPhi(p_{t-1})
 \quad \text{in mean square},
\end{equation}
which proves the claim. \qed
\end{proof}

\begin{remark}\label{rem:interval}
Note  that Prop.~\ref{prop:IPL} automatically implies  that, for every
given $t$,
\[
\lim_{N\rightarrow\infty} \max_{s\leqslant t}\lvert 
\widehat{Z}_s^{(N)} - p^{}_s \rvert =0  \quad
\text{in mean square},
\]
which is reminiscent of the continuous-time result  \cite[Theorem 11.2.1]{EK}.
Note, however, that the latter result is a \emph{strong} law of large
numbers; we have established the mean-square version here 
(which, of course, implies a weak law of large numbers since
convergence in mean square implies convergence in probability) since the
construction of a sequence of processes on the \emph{same} probability space
in the discrete-time setting is beyond the scope of this paper.
Note also that the convergence in \eqref{IPLconv} applies for 
any fixed $t\in\mathbb{N}_0$,
but need {\em not}  hold as $t\rightarrow\infty$. 
Indeed, the asymptotic behaviour of the stochastic system is radically
different from that of the deterministic one:
Due to resampling, the Markov chain is absorbing
(in fact, it experiences fixation of a single type
with probability one in the long run). In contrast, the deterministic system
never loses any type, and the complete product measure 
with respect to all links in $L$ 
is obtained as the stationary distribution, see \cite{Geiringer} and
\cite{discretereco}.
Let us emphasise  that it is the short time scale, not the 
long-term behaviour, that we are interested in here; see Section~\ref{sec:discussion}
for the discussion of the biological context.

\end{remark}
\subsection{Structure of the deterministic solution}
Based upon an initial population $p^{}_0$,  every individual in the population at time $t=1$
is either an unaltered copy of an individual from $p^{}_0$ or it is composed of
exactly two recombined segments, hence the population $p^{}_1$
is a mixture of $p^{}_0$ and the  $R^{}_{\alpha}(p^{}_0)$, $\alpha\in L$, in line with \eqref{rekooper}.
For $t>1$, the population will contain
individuals that consist of several segments pieced together 
from the sequences in the initial population  due to various
recombination events at different times.  To describe these, we use the
{\em composite} recombinators $R^{}_{G}$, $G\subseteq L$, which
act on probability vectors as 
\begin{equation} \label{eq:defcomposite}
    R^{}_{G} \, := \, \prod_{\alpha\in G} R^{}_{\alpha},
\end{equation}
where we set $R^{}_{ \{ \alpha\} } = R^{}_{\alpha}$. 
Here, the product is to be read as composition. It is, indeed, a 
matrix product if the recombinators
are written in their matrix representation, which is available in 
the case of finite
types considered here, provided the problem is embedded into
a larger space \citep{ellen1}. 
This definition is consistent since all $R^{}_{\alpha}$ are idempotents and commute with
each other, compare \cite{reco}.
Clearly, $R^{}_G(p)$ is the product measure derived from $p$ with respect to all links in $G$.
We thus expect the population at any time to be a 
convex combination of the  $R^{}_G(p^{}_0)$ with $G\subseteq L$.
This means
\begin{equation}\label{ansatzsol}
  p^{}_t \ts  = \varPhi^t(p^{}_0)\ts = \sum_{G\subseteq L} a^{}_G ( t ) R^{}_G ( p^{} _0 ) \, ,
\end{equation}
with $a^{}_G(0) = \delta^{}_{G,\varnothing}$,
 $a^{}_G(t)\geqslant 0$ for all $G\subseteq L$, and $\sum_{G\subseteq L}a^{}_G(t)=1$.
It has been proved by \cite{discretereco} that the solution indeed has this form,
but plausibility arguments go back to \cite{Geiringer}.
The difficulty consists in determining the coefficient functions
$a^{}_G(t)$.
Let us introduce the following abbreviations,
\begin{equation*}
    \begin{split}
    G^{}_{< \alpha} &:= \left \{ \beta \in G \mid \beta < \alpha \right \} , \quad
    G^{}_{> \alpha}  :=  \left \{ \beta \in G \mid \beta > \alpha \right \} ,\\
    G^{}_{ \leqslant \alpha} &:= \left \{ \beta \in G \mid \beta \leqslant \alpha \right \} , \quad
    G^{}_{ \geqslant \alpha}  := \left \{ \beta \in G \mid \beta \geqslant \alpha \right \}. 
  \end{split}
\end{equation*} 

Let us recall the recursion for the coefficient functions from
\cite{discretereco}:
\begin{theorem}\label{thm:adevelop}
  For all $G\subseteq L$ and $t\in\mathbb{N}_0$, the coefficient functions $a^{}_G(t)$ evolve according to
   \begin{equation}\label{nonlinrecur}
     a^{}_G(t+1)\, = \,  \Bigl( 1 - \sum_{\alpha\in L} \varrho^{}_\alpha \Bigr) 
     \thinspace a^{}_G(t) + \sum_{\alpha \in G } \varrho^{}_{ \alpha} 
          \Bigl( \sum_{H \subseteq L_{ \geqslant \alpha} }  a^{}_{G^{}_{ < \alpha}\cup H}(t) \Bigr) \thinspace
          \Bigl( \sum_{K \subseteq L_{ \leqslant \alpha }} a^{}_{K \cup G^{}_{> \alpha} }(t) \Bigr) \ts ,
   \end{equation}
with initial condition $a^{}_G(0) = \delta^{}_{G,\varnothing}$.
\qed
\end{theorem}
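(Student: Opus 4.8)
The plan is to derive the recursion directly from the fixed-point-type identity $p_{t+1} = \varPhi(p_t)$ by substituting the ansatz \eqref{ansatzsol} on both sides and comparing coefficients of the basis elements $R_G(p_0)$. Plugging \eqref{ansatzsol} into \eqref{rekooper} gives
\begin{equation*}
\sum_{G\subseteq L} a_G(t+1)\, R_G(p_0) \;=\; \Bigl(1-\sum_{\alpha\in L}\varrho_\alpha\Bigr)\sum_{G\subseteq L} a_G(t)\, R_G(p_0)\;+\;\sum_{\alpha\in L}\varrho_\alpha\, R_\alpha\!\Bigl(\sum_{G\subseteq L} a_G(t)\, R_G(p_0)\Bigr).
\end{equation*}
The first term on the right already has the required form. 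The real work is the second term: I must understand how the single recombinator $R_\alpha$ acts on a composite $R_G(p_0)$, i.e.\ compute $R_\alpha R_G$ as a composite recombinator and identify which $R_{G'}(p_0)$ it produces. Because the $R_\beta$ are commuting idempotents, the composition $R_\alpha R_G$ depends only on the set $G\cup\{\alpha\}$; but the coefficient bookkeeping is subtler because $R_\alpha R_G(p_0)$ need not itself be of the form $R_{G'}(p_0)$ unless one is careful about which marginals are being producted together. This is where the factorization structure enters.

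The key step — and the main obstacle — is the combinatorial identity expressing $R_\alpha$ applied to a convex combination of composite recombinators back in the same basis. The crucial structural fact (going back to the algebra of recombinators in \cite{reco,ellen1}) is that $R_\alpha$ cuts a sequence at link $\alpha$ into an independent product of its "$<\alpha$" part and its "$>\alpha$" part, and on each part the remaining recombinators in $G$ act separately. Concretely one shows
\begin{equation*}
R_\alpha\Bigl(\sum_{G\subseteq L} a_G(t)\, R_G(p_0)\Bigr)\;=\;\sum_{G\subseteq L}\;\Bigl(\sum_{H\subseteq L_{\geqslant\alpha}} a_{G_{<\alpha}\cup H}(t)\Bigr)\Bigl(\sum_{K\subseteq L_{\leqslant\alpha}} a_{K\cup G_{>\alpha}}(t)\Bigr)\, R_{G_{<\alpha}\cup\{\alpha\}\cup G_{>\alpha}}(p_0),
\end{equation*}
where the inner sums arise because, after cutting at $\alpha$, links lying entirely on one side of $\alpha$ in the original index set $G$ become "free" — marginalizing the $<\alpha$ block kills any link $\geqslant\alpha$, and marginalizing the $>\alpha$ block kills any link $\leqslant\alpha$ — so all preimages summing to the same effective set must be collected. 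Establishing this identity rigorously means checking, for each fixed target set $G$ with $\alpha\in G$, exactly which pairs $(H,K)$ contribute, and verifying that the marginal factorization $R_\alpha R_{G'} = $ (product of the $<\alpha$-marginal of $R_{G'_{<\alpha}}$ and the $>\alpha$-marginal of $R_{G'_{>\alpha}}$) holds as measures on $X$; this is the one computation I would not want to hand-wave.

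Granting that identity, the proof finishes by linearity and by the uniqueness of the representation: collecting the coefficient of $R_G(p_0)$ on both sides (noting that for $G$ with $\alpha\notin G$ the $\varrho_\alpha R_\alpha(\cdots)$ term contributes to $R_{G\cup\{\alpha\}}$, not to $R_G$, so only $\alpha\in G$ survive in the recombining part) yields precisely \eqref{nonlinrecur}. One subtlety to address: the decomposition \eqref{ansatzsol} is not a priori a basis expansion — the $R_G(p_0)$ need not be linearly independent for a given $p_0$ — so strictly speaking I would either invoke the result of \cite{discretereco} that the coefficients are well-defined, or (cleaner) prove the recursion at the level of the operator identity $\varPhi\circ(\text{ansatz}) = (\text{ansatz with }a_G(t+1))$ holding for \emph{all} $p_0$ simultaneously, which forces equality of coefficients by a dimension/genericity argument. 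The initial condition $a_G(0)=\delta_{G,\varnothing}$ is immediate from $p_0 = R_\varnothing(p_0)$.
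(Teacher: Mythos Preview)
Your approach is correct and is essentially the forward, algebraic derivation: insert the ansatz \eqref{ansatzsol} into $p_{t+1}=\varPhi(p_t)$, exploit that $R_\alpha$ factorises $p_t$ into its $<\alpha$ and $>\alpha$ marginals (on each of which only links on the respective side survive), and read off coefficients. Note, however, that the present paper does \emph{not} prove Theorem~\ref{thm:adevelop}; it merely recalls it from \cite{discretereco} (hence the \texttt{\textbackslash qed} directly after the statement). Your argument is in the spirit of that original source. The paper does, implicitly, give an alternative derivation of the same recursion in the \emph{second} proof of Theorem~\ref{thm:segisa}: there the identical recursion is obtained for $\PP(F_\tau=G)$ via a first-step (Kolmogorov backward) decomposition of the segmentation process, using conditional independence of segments and Proposition~\ref{prop:marg}. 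So the two routes are: yours, which is forward and uses the recombinator algebra; and the paper's, which is backward and uses the probabilistic independence of segments after a cut. The latter has the conceptual advantage of explaining \emph{why} the inner sums over $H$ and $K$ appear (they are marginalisations over the irrelevant half of the sequence), without any appeal to linear independence of the $R_G(p_0)$.

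One small correction to your write-up: you say that ``$R_\alpha R_G(p_0)$ need not itself be of the form $R_{G'}(p_0)$''. This is false; one always has $R_\alpha R_G(p_0)=R_{G\cup\{\alpha\}}(p_0)$, exactly because the $R_\beta$ are commuting idempotents (cf.\ \eqref{eq:defcomposite}). The genuine obstacle is not this but the \emph{nonlinearity} of $R_\alpha$: since $R_\alpha(q)=(\pi_{<\alpha}.q)\otimes(\pi_{>\alpha}.q)$ is quadratic in $q$, applying it to the convex combination $\sum_G a_G(t)R_G(p_0)$ produces cross terms, and it is the bilinear expansion of the tensor product of the two marginal sums that yields your double sum over $H$ and $K$. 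Your displayed identity is correct and already reflects this; only the verbal explanation of where the difficulty lies needs adjusting.
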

A verbal description of this iteration can already be found in \cite{Geiringer}.
It will become clear later that we may interpret 
$a^{}_G(t)$ as the proportion of the population whose types
have been pieced together by recombination at 
{\em exactly} the links of $G$.

Due to its nonlinearity, the recursion
does not allow
for an immediate solution (at least from four sites onwards).
The nonlinearity comes from the \emph{dependence} of links:
Due to the single-crossover assumption, a crossover event 
forbids any other recombination events in the same time step.
In sharp contrast, and quite surprisingly,
the analogous (deterministic) single--crossover  model in
{\em continuous} time has a very simple explicit solution
\citep{reco,MB}.
The main reason for this is 
the fact
that simultaneous crossover events are \emph{automatically}
excluded in continuous time.
This implies an  {\em effective independence} of links, which
turns the dynamics corresponding to Theorem~\ref{thm:adevelop} into a
\emph{linear} one.
For a detailed investigation of the differences between single-crossover 
dynamics in continuous and in discrete time, 
the reader is  referred to \cite{discretereco}.

The conventional way \citep{Bennett,Kevin1,Kevin2}
to overcome the obstacles of nonlinearity in recombination models
lies in finding an appropriate transformation of the dynamics to
a solvable diagonalised system, 
but this usually  involves a new set of coefficients that must be constructed
in a recursive manner.
We have performed this for the single-crossover model
\citep{discretereco},
 but the solution still requires recursions
and does not lead to closed-form expressions for the $a^{}_G(t)$.
In contrast, we will pursue the stochastic perspective here and look at
recombination \emph{backward} in time, which will lead us to
the coefficient functions in semi-explicit form.

\section{Ancestral recombination process}\label{sec:ancestral}
\subsection{The ancestral process}
In the {\em ancestral recombination process},
we follow the ancestry of the genetic material of
a selected individual from a population that evolved
according to the Wright-Fisher model with single-crossover recombination
 of Section~\ref{wf-scr}.
To this end, we start with an individual in the present population at time $t$
and let time run backwards, as illustrated in Figure~\ref{fig:wfances}
for two individuals from the realisation of the Wright-Fisher
model in Figure~\ref{fig:wright}. Let us first describe the resulting partitioning
of sites into  parents, keeping in mind that 
this happens independently of the types, in analogy with 
step {\bf (F1)} in the forward model. 

We denote by  $\tau$ the time backward from the present at time $t$, 
i.e., backward
time $\tau$ corresponds to forward time $t-\tau$. 
(Note that, altogether, we use the symbol $t$  both for the variable 
of time and for the
fixed number of generations for which the (forward-time) dynamics is
considered. In the latter sense, $t$ stands for `today'.)
We capture the
partitioning by a process $\{\Sigma_\tau\}_{\tau \in \NN_0}$ on 
$\Pi(S)$, the set of 
partitions of $S$. Here, the parts of $\Sigma_\tau$ correspond to
the  parents at (backward) time $\tau$ of our individual at (forward)
time $t$; sites in the
same part correspond to sites that go back to the same parent. 
In view of the forward Wright-Fisher model, it is clear that
$\{\Sigma_\tau\}_{\tau\in \NN_0}$ is declared as follows.

Start with $\Sigma_0 = \{ S\}$. 
Assume now that for some $\tau > 0$, 
$\Sigma_\tau = \sigma := \{ \sigma^{}_1,  \dots , \sigma^{}_k\}$, where
$\sigma^{}_j=\{ \sigma^{}_{j1} , \dots , \sigma^{}_{jn_j}\}$ and we imply 
$\sigma^{}_{j1} < \sigma^{}_{j2} < \dots <
\sigma^{}_{jn_j}$, $1 \leqslant j \leqslant k$. Referring back to
the Wright-Fisher model, $\Sigma_{\tau+1}$ 
is obtained in two steps:
\begin{enumerate}
\item[(S)] Splitting: Every part $\sigma_j$ of 
$\Sigma_\tau$,
$1\leqslant j \leqslant k$, independently of the others, either remains unchanged 
(probability $1-\sum_{\sigma_{j1} < \alpha < \sigma_{jn_j}} \varrho_\alpha$), 
or, for 
every $\sigma^{}_{j1}<\alpha < \sigma^{}_{jn_j}$, it may split into 
$\{ \sigma^{}_{j1}, \dots, 
\sigma^{}_{j\lfloor\alpha\rfloor}\}$ and 
$\{ \sigma^{}_{j\lceil\alpha\rceil},\ldots, \sigma^{}_{jn_j}\}$  
(probability $\varrho_\alpha$). Note that two or more $\alpha$'s can lead to
the same split if $\sigma^{}_j$ is not contiguous,  
where `contiguous' means an uninterrupted run of sites.
The resulting refined partition is denoted by  $\Sigma_{\tau}'$.
This step corresponds to the splitting 
of the ancestral material into smaller segments due to recombination,
where we do not yet decide which segment ends up in which parent.
\item[(C)] Coalescence:  
Each 
part of  $\Sigma_{\tau}'$ now chooses 
one out of $N$  parents, uniformly and with replacement. Parts 
that end up in the same parent are united; otherwise, nothing happens.
The resulting partition is 
$\Sigma_{\tau + 1}$. Figure~\ref{fig:wfances} illustrates this:
If  all parts are assigned to different parents,
then no coalescence takes place, that is, $\Sigma_{\tau + 1} = \Sigma_{\tau}'$
(as in Figure~\ref{fig:wfances}, left). 
If two or more parts go back to the same parent, we have
a  coalescence event,
see Figure~\ref{fig:wfances} (right). 

\end{enumerate}
A closely related process describing the ancestry of single individuals
in \emph{continuous time} and on a \emph{continuous chromosome}
was investigated by 
\cite{WiufHein}, but in the weak-recombination limit, and with a
different purpose; we will come back to this in Section~\ref{sec:discussion}. 

\begin{figure}
\begin{center} 
  \includegraphics[width=8cm,height=3cm]{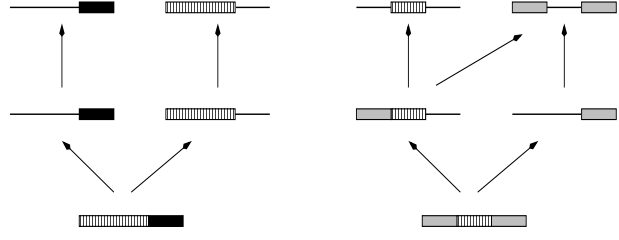}
  \end{center}  
\caption{Ancestries for two individuals from the Wright-Fisher population
of Figure~\ref{fig:wright}.
We trace back the ancestry of the segments present at $t=2$; 
the thin black lines indicate nonancestral material whose history is not relevant.
The left graph refers to the second individual from Figure \ref{fig:wright};
here the two segments go back to two different ancestors. The right graph corresponds to the third
individual from Figure \ref{fig:wright}. Two of its three segments 
have the same parent at $t=0$ due to a coalescence event. 
In the infinite population limit, such a situation  does not occur; rather, all
possible ancestries are binary trees.
 }
\label{fig:wfances}
\end{figure}

Our aim is now to determine the law for the ancestry and the type of 
a random individual at time $t$ \textit{without} constructing a realisation 
of the forward Wright-Fisher model first. 
Such an individual, together with its ancestry, may be 
constructed in a three-step procedure, see Figure~\ref{fig:bigtree}.
\begin{figure}
\begin{center}\hspace{-2mm} 
  \includegraphics[scale=0.3]{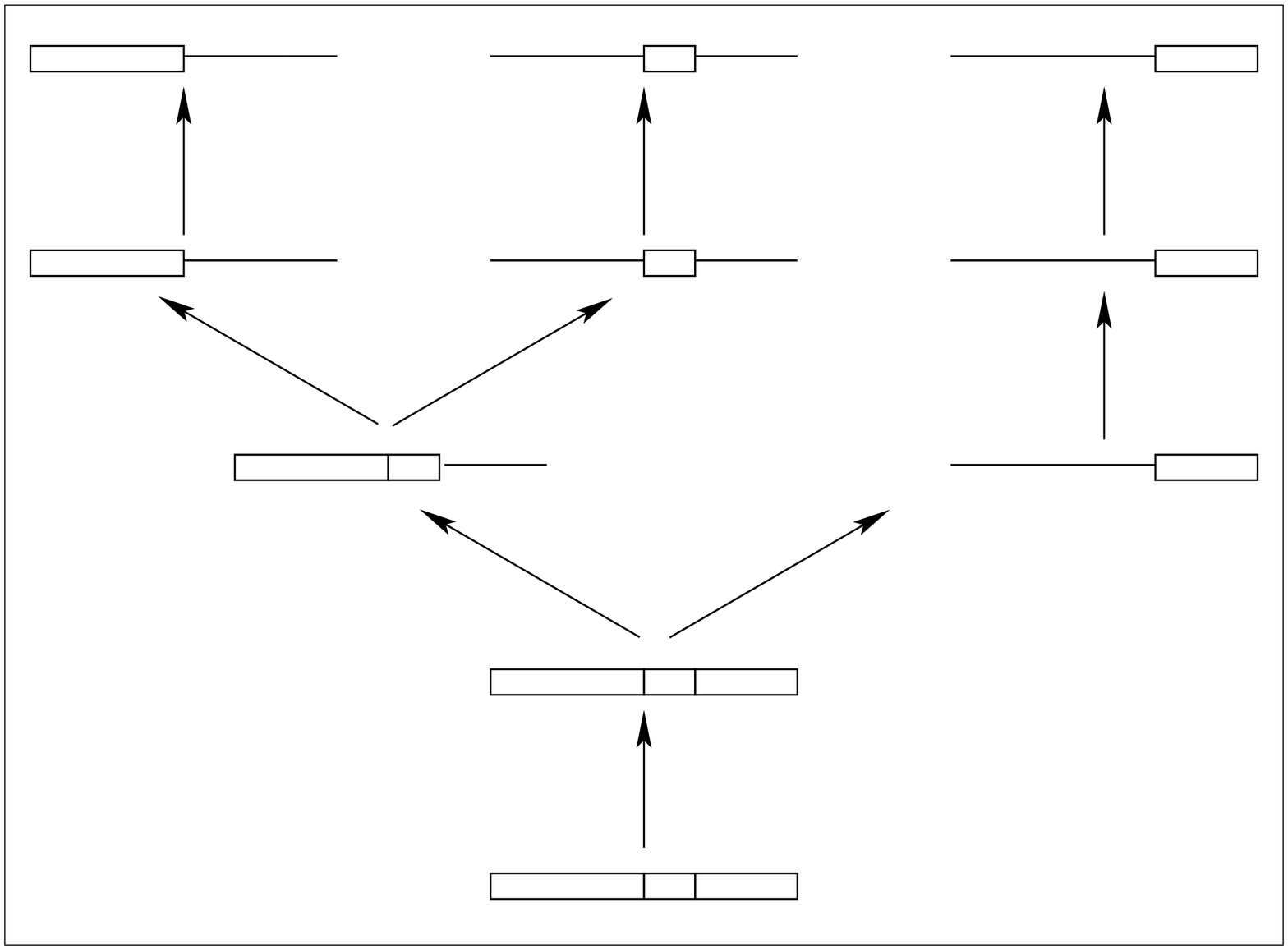} \\[6mm]
  \includegraphics[scale=0.3]{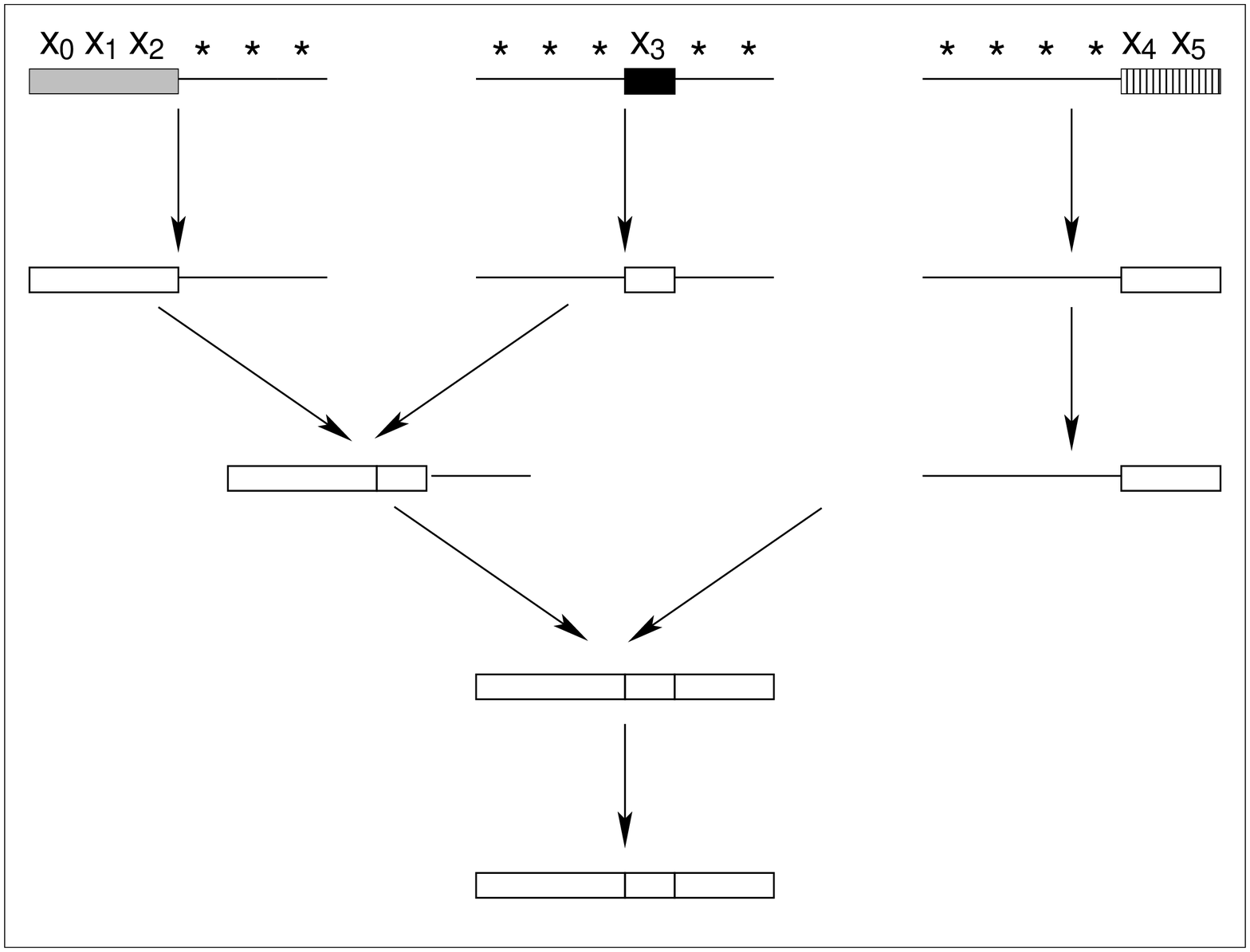} \\[6mm]
  \includegraphics[scale=0.3]{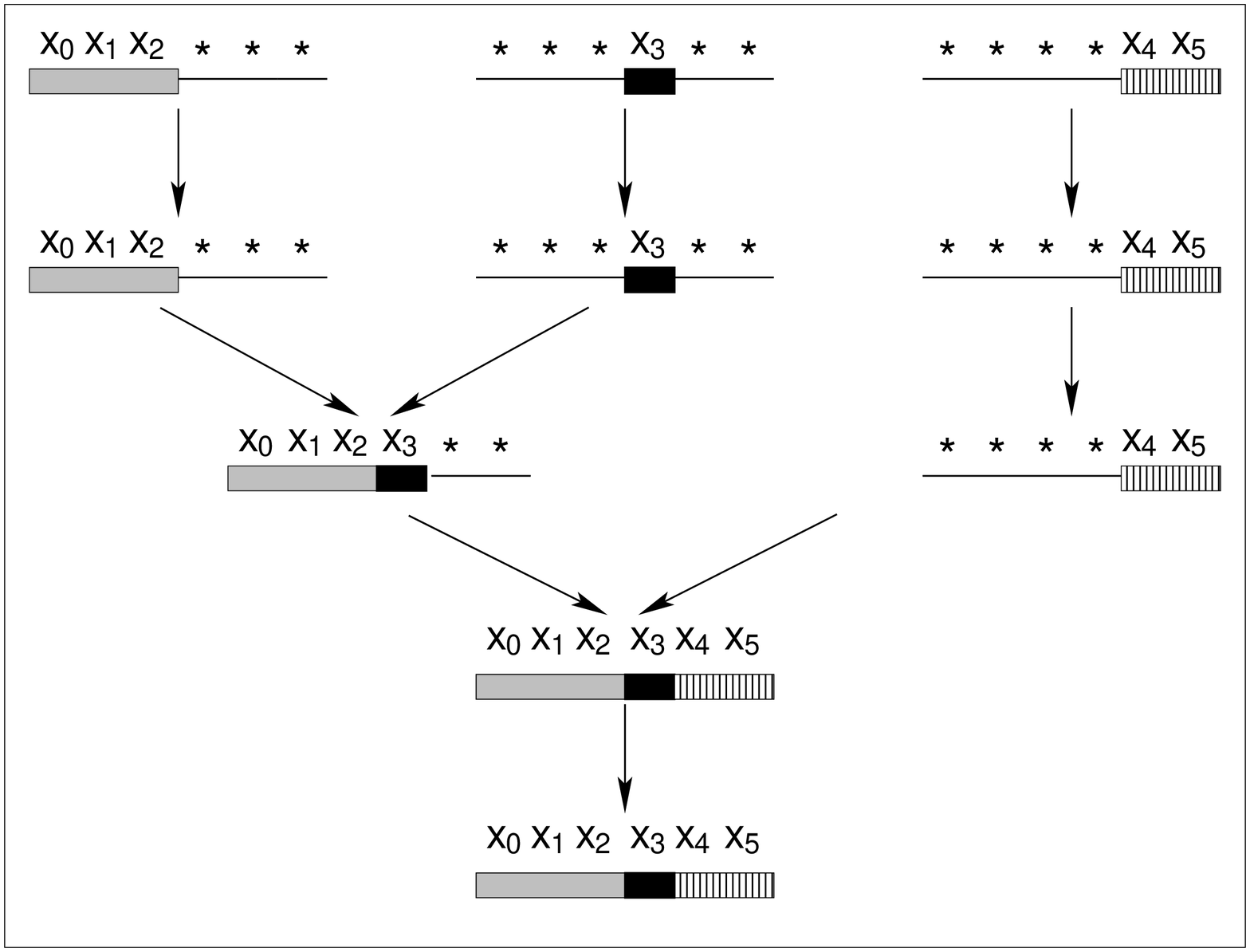}
\end{center} 
\caption{\label{fig:bigtree}
Construction of a random individual at time $t$, together with
its ancestry. Top: partitioning of sites (backward; step (A1)); middle:
assignment of colours and letters at the top; bottom: propagation of
colours and letters downwards. The middle and bottom panels
together correspond to the simultaneous performance of steps (A2) and (A3).
In this example, there are no coalescence events, so all
partitions are  ordered. As a consequence, this realisation of the
partitioning process 
$\{\Sigma_{\tau}\}_{0 \leqslant \tau \leqslant t}$ is a tree and, 
at the same time,
a realisation of
the segmentation process $\{F_{\tau}\}_{0 \leqslant \tau \leqslant t}$ of Section
\ref{subsec:segmentation}.} 
\end{figure}
\begin{enumerate}
\item[(A1)]
Run  $\{\Sigma_\tau\}_{\tau\in \NN_0}$  until $\tau=t$. 
$\Sigma_t$ tells us how the ancestral material of  our individual
is partitioned into parents at forward time $0$ (in Figure~\ref{fig:bigtree}, 
this
is the top of the tree).
\item[(A2)]
Assign a different colour to each part of $\Sigma_t$.
The colours are for illustration; each colour corresponds to 
one individual from the initial
population (at $t=0$), chosen uniformly \emph{without} replacement.
Equivalently, one may sample a parent from the initial population
\emph{with} replacement for every part of $\Sigma_{t-1}'$.
(The latter is  more convenient and will be favoured in what
follows.)
In any case, 
every site receives a colour, which is propagated
downwards. This results in the 
present individual pieced together from segments of different colours that 
correspond to different parental individuals.

\item[(A3)]
Assign a letter to every site at $t=0$ (i.e., $\tau=t$). 
By (A2), this entails that the type 
for part $\sigma_j$ of $\Sigma_{t-1}'$ is drawn from $\pi^{}_{\sigma_j}.
\widehat{Z}_0$, independently for every element of the partition. 
Like the colours, 
the letters are attached 
to the sites once and for all, and thus propagated downwards, i.e., down to
$\Sigma_0$.
\end{enumerate}

As a consequence of (A2) and (A3), conditional on $\Sigma'_{t-1} = \sigma 
= \{\sigma_1 , \dots , \sigma_k\}$, the type distribution at 
present (that is, at forward time $t$) is 
$:\!\!(\pi^{}_{\sigma_1} . \widehat Z_0) \otimes \dots \otimes 
(\pi^{}_{\sigma_k} . \widehat Z_0)\!\!:$, where $:\! \ldots \! :$ means 
that the factors are ordered as in $X$.
Denoting by $\Xi_t$ the type at forward time $t$, we thus have
\begin{equation}\label{eq:typedistr}
\PP(\Sigma_{t-1}'=\{\sigma_1 , \dots , \sigma_k\},\Xi_t=x) =
\PP(\Sigma_{t-1}'=\{\sigma_1 , \dots , \sigma_k\}) 
:\!(\pi^{}_{\sigma_1} . \widehat Z_0) \otimes \dots 
\otimes (\pi^{}_{\sigma_k} . 
   \widehat Z_0)\! : (x)\,.
\end{equation}
Eq.~\eqref{eq:typedistr} gives the marginal 
distribution (of  partition and type) for every \emph{single}
individual in a sample, or in the entire population.
Due to coalescence events, however, the individuals
in a finite population are correlated, and the joint distribution is
a difficult matter. (This is investigated within the framework of the
\emph{ancestral recombination graph}, which traces back the genealogy
of a \emph{sample} of individuals; compare \citealt[Chap.~7.2]{Wakeley},
\citealt[Chap.~3.4]{Durrett}, and
Section~\ref{sec:discussion}).

Our  goal here is a somewhat simpler one, namely, 
the distribution of types and ancestries in the 
$N\to\infty$
limit (under \emph{strong} recombination). In this limit,
the partitioning process simplifies substantially due to the 
following result.

\begin{lemma}\label{lem:refine}
Let $\Omega_t$ be the event that 
$\Sigma_{\tau} = \Sigma_{\tau-1}'$ for $1\leqslant \tau \leqslant t$;
that is, no coalescence occurs until (backward) time $t$, or,
equivalently,  $\{\Sigma_\tau\}_{0 \leqslant\tau \leqslant t}$ 
is a process of progressive refinements of ordered partitions.
For 
every fixed finite $t$, one  
has $\mathbb{P}(\Omega_t) \geqslant 1 -n(n+1)t/2N+ \cO(1/N^2)$.
\end{lemma}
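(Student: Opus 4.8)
The plan is to bound the probability of the complementary event $\Omega_t^c$, that at least one coalescence occurs among the first $t$ backward steps, by a union bound over the $t$ time steps, and at each step by a union bound over pairs of parts that could collide. The key observation is that a coalescence at backward step $\tau$ means that two distinct parts of $\Sigma_\tau'$ pick the same parent out of $N$; given that there are $m$ parts present, the probability that some pair collides is at most $\binom{m}{2}\frac{1}{N}$, since each unordered pair coalesces with probability exactly $1/N$ (each of the two parts chooses uniformly and independently among $N$ parents). So the whole analysis reduces to controlling how many parts can be present.

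Here the single-crossover structure is what keeps things clean. First I would argue that $\Sigma_\tau'$, obtained from $\Sigma_\tau$ by splitting, has at most as many parts as it would if every part were contiguous and split at every available link; and more importantly, on the event that no coalescence has yet occurred, each part of $\Sigma_\tau$ is an interval (contiguous run) of sites, so a part spanning sites $i,\dots,j$ has exactly $j-i$ internal links, and after the splitting step it contributes at most $j-i+1$ parts to $\Sigma_\tau'$. Summing over a partition of $S=\{0,\dots,n\}$ into intervals, the total number of parts of $\Sigma_\tau'$ is therefore at most $n+1$. Hence at every step $\tau$ with $1\leqslant\tau\leqslant t$, conditional on no prior coalescence, the probability of a coalescence at that step is at most $\binom{n+1}{2}\frac{1}{N} = \frac{n(n+1)}{2N}$. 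One subtlety: I want this bound for the \emph{conditional} probability $\PP(\Sigma_{\tau}\neq\Sigma_{\tau-1}'\mid \Omega_{\tau-1})$, and the interval structure of the parts is exactly what $\Omega_{\tau-1}$ guarantees, so the conditioning is benign.

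Then $\PP(\Omega_t^c) = \PP\bigl(\bigcup_{\tau=1}^{t}\{\Sigma_\tau\neq\Sigma_{\tau-1}'\}\bigr)$, which by the union bound (splitting on the first step at which coalescence happens, so that the interval structure is available) is at most $\sum_{\tau=1}^{t}\PP(\Sigma_\tau\neq\Sigma_{\tau-1}',\,\Omega_{\tau-1}) \leqslant t\cdot\frac{n(n+1)}{2N}$. Therefore $\PP(\Omega_t)\geqslant 1 - \frac{n(n+1)t}{2N}$, which is even slightly stronger than claimed; the $\cO(1/N^2)$ slack in the statement simply absorbs the fact that $\binom{m}{2}/N$ is only an upper bound on the collision probability when $m$ parts choose among $N$ bins (the exact probability of at least one collision is $1 - \prod_{i=0}^{m-1}(1-i/N) = \binom{m}{2}/N + \cO(1/N^2)$), and also allows one to not worry about double-counting in the union bound.

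The step I expect to be the only real thing to get right is the combinatorial claim that, on $\Omega_{\tau-1}$, the refined partition $\Sigma_\tau'$ has at most $n+1$ parts — i.e., that once all parts are intervals, one splitting round cannot produce more than $n+1$ pieces, which is immediate since a partition of $n+1$ sites into intervals, each interval of length $\ell$ splitting into at most $\ell$ pieces, yields at most $\sum\ell = n+1$ pieces. Everything else (the per-pair collision probability $1/N$, the union bound over $t$ steps, and the asymptotic expansion of the exact no-collision probability) is routine.
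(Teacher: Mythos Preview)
Your argument is correct and essentially the same as the paper's: bound the per-step coalescence probability using that $\Sigma_{\tau-1}'$ has at most $n+1$ parts, then combine over $t$ steps. The paper phrases the combination multiplicatively (writing the exact no-collision probability $q_k=\prod_{i=0}^{k-1}(1-i/N)$, bounding $q_k\geqslant q_{n+1}$, and taking $\PP(\Omega_t)\geqslant q_{n+1}^t$), whereas you phrase it additively via a first-coalescence decomposition; both yield the same leading term.

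One simplification: the step you flag as ``the only real thing to get right'' is in fact trivial and needs no interval structure or conditioning on $\Omega_{\tau-1}$. Since $\Sigma_{\tau-1}'$ is a partition of $S$ and $|S|=n+1$, it has at most $n+1$ parts unconditionally. The paper simply invokes $k\leqslant |S|=n+1$ without further ado; your contiguous-interval accounting is correct but unnecessary.
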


\begin{proof}
For every $\tau$, in step (S),
$\Sigma'_{\tau}$ is obtained from $\Sigma_{\tau}$ as a 
refinement. It is thus clear that 
$\{\Sigma_{\tau}\}_{0 \leqslant\tau \leqslant t}$ is a process of 
progressive refinements
(and hence of ordered partitions) if and only if
$\Sigma_{\tau}=\Sigma'_{\tau-1}$ for $1 \leqslant \tau \leqslant t$.
If  $\Sigma'_{\tau-1}$ has $k$ parts, then the probability that each is
assigned to a different parent in the  coalescence step leading
to $\Sigma_{\tau}$  is 
\begin{equation}\label{eq:q_k}
q^{}_k := 1 \cdot \left( 1-\frac{1}{N} \right) \cdot 
\left( 1-\frac{2}{N}\right) \cdot \ldots \cdot \left( 1 - \frac{k-1}{N} \right).
\end{equation}
Obviously,
\begin{equation} \label{eq:q}
q^{}_k \geqslant q^{}_{n+1}= 
1 - \frac{n (n+1)}{2N} + \mathcal \cO(1/N^2)
\end{equation}
because $k \leqslant |S|=n +1$. As a 
consequence, 
\[
\PP(\Omega_t)\geqslant q_{n+1}^t= 1-\frac{ n (n+1) t}{2N} + 
\cO(1/N^2) 
\]
 for 
every fixed  finite $t$.  \qed
\end{proof}

Note that Lemma~\ref{lem:refine} implies that, for any finite $t$, 
coalescence events are absent in the $N \to \infty$ limit 
and the ancestry  is a tree -- in line with intuition, and as in 
Figure~\ref{fig:wfances} (left), and in Figure~\ref{fig:bigtree}.
Note also that
Lemma~\ref{lem:refine} holds for any finite $t$, but not for $t \to \infty$,
in the same spirit as the law of large numbers in Prop.~\ref{prop:IPL}.

\subsection{Segments and the segmentation process}
\label{subsec:segmentation}
Since, as we have just seen, we only have to deal with ordered
partitions (with probability one for any finite $t$ as $N \to \infty$), 
we can introduce a simplifying notation for the partitions
that is based on links rather than on sites.
 This is because ordered partitions are in one-to-one
correspondence with the subsets of $L$ as follows. 
As in \cite{MB}, let  $G=\{\alpha^{}_1,\ldots,\alpha^{}_{\vert G\vert}\}\subseteq L$,
with $\alpha^{}_1<\alpha^{}_2<\cdots <\alpha^{}_{\vert G\vert}$, an ordering
which we will assume implicitly from now on.
Let then ${\mathcal S}(\varnothing):=\{S\}$ and,
for $G \neq \varnothing$, let ${\mathcal S}(G) := 
\{\sigma_1, \sigma_2, \ldots, \sigma_{\vert G\vert+1}\}$ denote the
ordered partition of $S$ with parts 
\begin{equation}\label{eq:parts}
\sigma_1:= \{0,\ldots, \lfloor \alpha_1 \rfloor \}, \,
\sigma_2:= \{\lceil \alpha_1 \rceil, \ldots, \lfloor \alpha_2 \rfloor \},
\ldots, \sigma_{\vert G\vert+1} := 
\{\lceil \alpha_{\vert G\vert} \rceil, \ldots, n \}.
\end{equation}
In particular, ${\mathcal S}(L)= \bigl \{\{0\},\ldots, \{n\} \bigr \}$. It is clear that 
${\mathcal S}(H)$ is a
refinement of ${\mathcal S}(G)$ if and only if $G \subseteq H$.
It is also obvious that $\cS$ defines a bijection; its inverse,
$\psi := \cS^{-1}$, associates with every \emph{ordered} partition
of $S$ the corresponding subset of $L$, so that
$\psi(\cS(G))=G$ for all $G \subseteq L$.  

We now define the associated ordered partitions ${\mathcal L}^{}_G$
of $L \setminus G$. Let $\mathcal{L}_\varnothing=\{L\}$ and, for
$G \neq \varnothing$, set
(cf.~Figure~\ref{fig:subsystems}):
\begin{equation}\label{eq:L_G}
\begin{split}
  \widetilde {\mathcal L}^{}_G & :=  \Bigl \{ \big \{\alpha \in L: \tfrac{1}{2} \leqslant \alpha < \alpha^{}_1 \},   
          \big \{\alpha \in L: \alpha^{}_{1} < \alpha <\alpha^{}_{2} \big \},
   \ldots, 
    \big\{\alpha \in L: \alpha^{}_{\vert G\vert} < \alpha \leqslant \tfrac{2n-1}{2} \big\} \Bigr \}   \,, \\
  {\mathcal L}^{}_G & :=  \widetilde {\mathcal L}^{}_G \setminus \{\varnothing\}\,.
\end{split}
\end{equation}
That is, $\mathcal{L}^{}_G$ is the ordered partition of $L \setminus G$
that  holds the segments  (in the sense of  contiguous sets of links)
that arise when recombination has occurred 
at all links in $G$;
in particular, 
$\mathcal{L}_L= \{ \}$.

\begin{figure}[h]
  \begin{center} 
  \includegraphics[width=7cm,height=4.5cm]{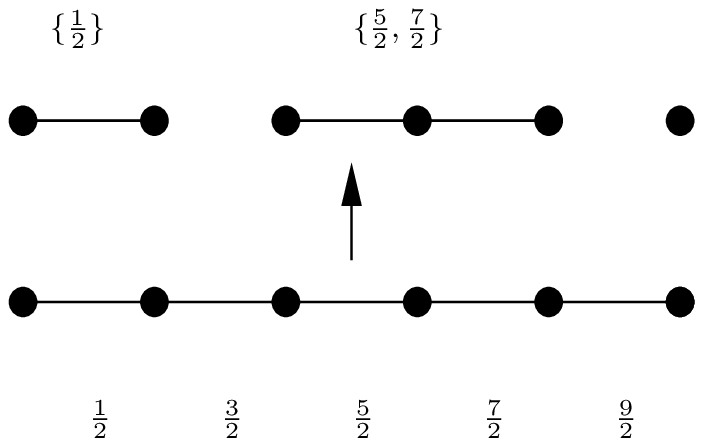}
  \end{center}  
\caption{The segments induced by $G=\left\{\frac{3}{2},\frac{9}{2}\right\}$
in the case $L=\left\{\frac{1}{2},\ldots,\frac{9}{2}\right\}$
(i.e., $\widetilde {\mathcal L}^{}_G = 
\bigl \{ \{\frac{1}{2}\},\{\frac{5}{2},\frac{7}{2}\},\varnothing\bigr \}$, $\mathcal{L}_G = \bigl \{ \{\frac{1}{2}\},\{\frac{5}{2},\frac{7}{2}\}\bigr \}$). 
$\mathcal{L}_G$ corresponds to the ordered partion
${\mathcal S}(G)=\{ \{0,1\},\{2,3,4\},\{5\}\}$ of $S=\{0,1,2,3,4,5\}$.}
\label{fig:subsystems} 
\end{figure}

Let us now consider the following process 
of progressive segmentation (which will turn out to coincide
with the 
$N\to \infty$ limit of the partitioning process for any finite time).

\begin{definition}[Segmentation process] \label{defin:segmentation}
The {\em segmentation process} is the discrete-time Markov chain 
$\{F_{\tau}\}_{\tau \in \NN_0}$, where $F_{\tau}$ takes values in the
power set of $L$ according to the following rules.
Start with $F_0= \varnothing$ and recall that $\cL_{\varnothing}=\{L\}$.
If $F_{\tau} =G$, choose either none or
one link in every segment, according to the following rule.
From segment $I$ of $\mathcal{L}_G$,
independently of all other segments, either no link is chosen
(probability $1-\sum_{\alpha\in I}\varrho^{}_{\alpha}$), or a
single link is chosen, namely  link $\alpha\in I$ with probability 
$\varrho^{}_{\alpha}$.
Then $F_{\tau+1}$ is  the union of $G$ with the set of all newly chosen links.
That is,
\begin{equation}\label{segmentprocess}
  F_{\tau+1} = F_{\tau} \cup A, \quad \text{where} \quad A =(\bigcup_{I\in\mathcal{L}_G} A^{}_I)
\end{equation}
and
\begin{equation}\label{segmentprocess2}
  A^{}_I =
\begin{cases}
  \varnothing, & \text{with probability } 1-\sum_{\alpha\in I}\varrho^{}_{\alpha} \, , \\
  \{\alpha\}, &  \text{with probability } \varrho^{}_{\alpha} \text{, for all } \alpha\in I \, .
\end{cases}
\end{equation}
\end{definition}
Clearly, picking a link corresponds to recombination, and
$F_{\tau}$ is the set of links that have been cut until time $\tau$.
Note that, as in the Wright-Fisher model with recombination (and its 
deterministic limit),
the links are not, in general, independent: At most one link in a
given segment may be cut in one time step; cutting of one link
prevents cutting of any other link in the same generation.
However, the backward point
of view adopted here reveals \emph{(conditional) independence of the 
individual segments}
once they arise. Put differently,  links are independent as soon as
they are on different segments. This is analogous to the conditional
independence of offspring inviduals in branching processes 
and will turn out as the golden key to the solution.

The connection of the segmentation process
with the partitioning process  can now be clarified (we use upper indices
once more to denote the dependence on population size):

\begin{proposition}
\label{coro:Sigma_F}
Let $t\geqslant 0$ be arbitrary but fixed. The law of 
$\{\Sigma^{(N)}_{\tau} \}_{ 0 \leqslant \tau \leqslant t}$ then agrees with that of  
$\{F_{\tau}\}_{ 0 \leqslant \tau \leqslant t}$ up to $\cO(1/N)$, 
provided  $\{\Sigma^{(N)}_{\tau}=\sigma\}$
is put in bijective correspondence with $\{F_{\tau}=\psi(\sigma)\}$ 
for all ordered
partitions $\sigma$ and
$0 \leqslant \tau \leqslant t$. For individual time points $\tau \leqslant t$,
this
implies specifically that
\begin{equation} \label{eq:Sigma_F_t}
  \PP(\Sigma^{(N)}_{\tau}=\sigma) =
  \begin{cases} \PP(F_{\tau}=\psi(\sigma)) + \cO(1/N), & \text{if } \sigma  
      \text{ is an ordered partition}, \\
                \cO(1/N),         & \text{otherwise}  \end{cases}
\end{equation}
for every $\sigma \in \Pi(S)$, with $\psi=\cS^{-1}$ as defined after \eqref{eq:parts}.
\end{proposition}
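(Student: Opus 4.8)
The plan is to prove Proposition~\ref{coro:Sigma_F} by a coupling-type argument that matches the two processes step by step, using Lemma~\ref{lem:refine} to control the error. First I would observe that the two processes differ \emph{only} through the coalescence step (C): on the event $\Omega_t$ of Lemma~\ref{lem:refine}, step (C) never unites any parts, so $\Sigma_{\tau}^{(N)}=\Sigma_{\tau-1}'$ for all $1\leqslant\tau\leqslant t$, and the partitioning process reduces to a pure progressive-refinement (equivalently, ordered-partition) process. The key structural remark is then that, restricted to ordered partitions, the splitting step (S) of the partitioning process and the segmentation rule \eqref{segmentprocess}--\eqref{segmentprocess2} of Definition~\ref{defin:segmentation} are \emph{literally the same rule}, once one translates an ordered partition $\sigma$ into the link set $\psi(\sigma)$ via the bijection $\cS$: a part $\sigma_j$ of $\Sigma_\tau$ with $\sigma_{j1}<\cdots<\sigma_{jn_j}$ corresponds to a segment $I\in\mathcal{L}_{\psi(\sigma)}$ consisting of exactly the links strictly between $\sigma_{j1}$ and $\sigma_{jn_j}$ (here one uses that $\sigma$ is contiguous because it is ordered, so ``$\sigma_{j1}<\alpha<\sigma_{jn_j}$'' and ``$\alpha\in I$'' coincide), and splitting $\sigma_j$ at such an $\alpha$ corresponds precisely to adjoining $\alpha$ to the link set; the probabilities $\varrho_\alpha$ and the no-split probability $1-\sum_{\alpha\in I}\varrho_\alpha$ match verbatim.

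Next I would make this into a statement about laws on the whole path. Define the coupling in which $\{F_\tau\}_{0\leqslant\tau\leqslant t}$ is run from the splitting steps of $\{\Sigma_\tau^{(N)}\}_{0\leqslant\tau\leqslant t}$: at each $\tau$, use the outcome of step (S) (which, on $\Omega_t$, is an ordered partition) and read it off as a link set via $\psi$. On $\Omega_t$ these two paths are in exact bijective correspondence, $\{\Sigma_\tau^{(N)}=\sigma\}\cap\Omega_t=\{F_\tau=\psi(\sigma)\}\cap\Omega_t$ for every ordered $\sigma$. Hence for any event $B$ in the path space (in particular any cylinder event determined by finitely many $\tau\leqslant t$),
\begin{equation*}
\bigl|\PP(\{\Sigma_\cdot^{(N)}\}\in B)-\PP(\{F_\cdot\}\in \psi(B))\bigr|
\;\leqslant\; 2\,\PP(\Omega_t^{\mathrm{c}})
\;=\;\cO(1/N)
\end{equation*}
by Lemma~\ref{lem:refine}. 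This is exactly the assertion that the law of $\{\Sigma_\tau^{(N)}\}_{0\leqslant\tau\leqslant t}$ agrees with that of $\{F_\tau\}_{0\leqslant\tau\leqslant t}$ up to $\cO(1/N)$ under the stated correspondence. Specialising $B$ to the single-time event $\{\Sigma_\tau^{(N)}=\sigma\}$ for a fixed $\tau\leqslant t$ gives the first line of \eqref{eq:Sigma_F_t}; and for a $\sigma$ that is \emph{not} an ordered partition, the event $\{\Sigma_\tau^{(N)}=\sigma\}$ forces a coalescence to have occurred by time $\tau\leqslant t$, so it is contained in $\Omega_t^{\mathrm{c}}$ and hence has probability $\cO(1/N)$, which is the second line.

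The only genuinely delicate point is the bookkeeping for \emph{non-contiguous} parts in step (S): the proposition's statement (following Figure~\ref{fig:subsystems}) warns that ``two or more $\alpha$'s can lead to the same split if $\sigma_j$ is not contiguous''. This does not arise here precisely because we work on $\Omega_t$, where every $\Sigma_\tau^{(N)}$ is an \emph{ordered} partition and therefore every part \emph{is} contiguous; so each admissible link $\alpha$ in a segment produces a distinct split, and the map between ``choose link $\alpha\in I$'' and ``split $\sigma_j$ at $\alpha$'' is a bijection with matching probabilities. I would state this explicitly so that the verbatim identification of the two transition rules is airtight. Beyond that, the proof is essentially the observation that the segmentation process is, by construction, the $N\to\infty$ caricature of the partitioning process with the coalescence step switched off, and Lemma~\ref{lem:refine} quantifies the cost of switching it off as $\cO(1/N)$ uniformly over the finite horizon $t$.
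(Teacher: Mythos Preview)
Your approach is essentially the paper's: both rest on Lemma~\ref{lem:refine} together with the observation that, on ordered partitions, the splitting step (S) is literally the segmentation rule under the bijection $\psi$. The paper phrases this by conditioning on $\Omega_t$ and describing $\{\Sigma^{(N)}_\tau\mid\Omega_t\}$ as a segmentation process that is ``killed'' in each step with probability $1-q_k$ (total killing probability $1-\PP(\Omega_t)=\cO(1/N)$), whereas you phrase it as a coupling; the content is the same.

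One point deserves tightening. Your coupling, read literally as ``set $F_\tau$ to the outcome of step (S) via $\psi$'', is only defined on $\Omega_t$; if you complete it by putting $F_\tau=\psi(\Sigma^{(N)}_\tau)$ there and something independent on $\Omega_t^{\mathrm c}$, the resulting $F$ does \emph{not} have the law of the segmentation process --- it has the conditional law of $\psi(\Sigma^{(N)}_\tau)$ given $\Omega_t$ on that event, which (as the paper remarks just after its proof) carries an $\cO(1/N)$ bias towards partitions with fewer parts. Your coupling inequality $|\PP-\PP|\leqslant 2\,\PP(\Omega_t^{\mathrm c})$ needs both marginals to be correct. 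The routine fix is to index the splitting randomness by (time, \emph{segment}) rather than by (time, \emph{part of }$\Sigma$), let both processes use it, and give coalescence independent randomness; then each marginal is right by construction and the two paths still coincide on $\Omega_t$, so your bound goes through.
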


\begin{proof}
It is clear that, under the above identification,
the initial conditions ($\Sigma^{(N)}_{0}=\{S\}$
and $F_0=\psi(\{S\})=\varnothing$) agree. It is also clear that,
if $\Sigma^{(N)}_{\tau}=\cS(G)$ for some $G \subseteq L$,
then $(\Sigma^{(N)})'_{\tau}$ follows the same
law as $F_{\tau+1}$ given $F_{\tau}=G$ (by step (S) and 
Def.~\ref{defin:segmentation}).

Consider now $\{\Sigma^{(N)}_{\tau} \}_{ 0 \leqslant \tau \leqslant t}$
conditional on $\Omega_t$. By the above observation together with
Lemma \ref{lem:refine},  the law of
the conditional process  may be understood as follows.
Run $\{F_{\tau}\}_{ 0 \leqslant \tau \leqslant t}$, but in every step $\tau$,
kill the process with probability $1-q_{k}$ (from \eqref{eq:q_k})
if $|F_{\tau}|=k$.
The law of the surviving process then is the law of 
$\{\Sigma^{(N)}_{\tau} \, \mid \, \Omega_t\}_{ 0 \leqslant \tau \leqslant t}$.
Since the killing probability up to time $t$
is $1-\PP(\Omega_t)=\cO(1/N)$ 
(see Lemma \ref{lem:refine}),  the law of 
$\{\Sigma^{(N)}_{\tau}\, \mid \, \Omega_t \}_{ 0 \leqslant \tau \leqslant t}$
agrees with that of $\{F_{\tau}\}_{ 0 \leqslant \tau \leqslant t}$ up to
$\cO(1/N)$. Finally, using $1-\PP(\Omega_t)=\cO(1/N)$ once more
yields the claim.  
\end{proof}   \qed

Let us remark that \eqref{eq:Sigma_F_t}  (evaluated at time 
$\tau-1$) also entails that
\begin{equation}
\label{eq:Sigma_F_t-1}
  \PP((\Sigma^{(N)})'_{\tau-1})=\sigma) =
\begin{cases} \PP(F_{\tau}=\psi(\sigma)) + \cO(1/N), & \text{if } \sigma  
      \text{ is an ordered partition,} \\
                \cO(1/N),         & \text{otherwise  }  \end{cases}
\end{equation}
since $(\Sigma^{(N)})'_{\tau-1}$ is obtained from $\Sigma^{(N)}_{\tau-1}$
according to the same law as $F_{\tau}$ from $F_{\tau-1}$, provided
$\Sigma^{(N)}_{\tau-1}$ is an ordered partition.
Let us also remark that, for \emph{finite} $N$,
$\{\Sigma^{(N)}_{\tau} \mid \Omega_t\}_{0 \leqslant \tau \leqslant t}$ 
only agrees with $\{F_{\tau}\}_{0 \leqslant \tau \leqslant t}$
up to a probability of $\cO(1/N)$. This is due to a bias, in a
finite population,  towards partitions with fewer parts, since
these bear less
risk of coalescence.

Before we proceed, let us note another elementary, but crucial property of
the segmentation process. Consider the segmentation process 
$\{F^{(\tilde L)}_{\tau}\}_{\tau \in \NN_0}$ on a \emph{contiguous} subset
$\tilde L$ of  $L$. Here $\{F^{(\tilde L)}_{\tau}\}_{\tau \in \NN_0}$
is  defined in the same way as  $\{F_{\tau}\}_{\tau \in \NN_0}$ but with $L$ replaced
by $\tilde L$, and based on the
recombination probabilities
$\varrho_{\alpha}$, $\alpha \in \tilde L$, alone.
Here, the upper index now indicates dependence on the
(sub-) set of links, which we may omit if $\tilde L = L$; that is, 
$F_{\tau}^{(L)}=F_{\tau}$. Likewise, we will denote by 
$\mathcal L^{(\tilde L)}_G$, $G \subseteq \tilde L$, the partition of 
$\tilde L \setminus G$ defined in analogy with \eqref{eq:L_G}, with
$L$ replaced by $\tilde L$. 
We then have the following important fact.

\begin{proposition}[marginalisation property]\label{prop:marg}
Let $\tilde L$ be a contiguous subset of $L$. 
The process $\{F_{\tau}^{(\tilde L)}\}_{\tau \in \NN_0}$ then is the 
marginal version of 
$\{F_{\tau}^{(L)}\}_{\tau \in \NN_0}$ with respect to the links
in $\tilde L$, that is,
for all $G \subseteq \tilde L$ and all $\tau \in \NN_0$, we have
\[
 \PP(F_{\tau}^{(\tilde L)}=G) = \sum_{H \subseteq L \setminus \tilde L} 
  \PP(F_{\tau}^{(L)}=G \cup H).
\]
\end{proposition}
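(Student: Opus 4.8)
The plan is to prove the marginalisation property by induction on $\tau$, exploiting the key structural feature of the segmentation process: in a single step, each contiguous segment of $\mathcal{L}_G$ evolves \emph{independently}, and the rule depends only on the recombination probabilities of links inside that segment. First I would note that at $\tau=0$ the claim is trivial, since $F_0^{(\tilde L)}=\varnothing$ and $F_0^{(L)}=\varnothing$, so both sides equal $1$ for $G=\varnothing$ and $0$ otherwise (the only $H$ contributing being $H=\varnothing$). For the inductive step, I would fix $G\subseteq\tilde L$ and write, using the Markov property and the one-step transition rule \eqref{segmentprocess}--\eqref{segmentprocess2},
\[
 \PP(F_{\tau+1}^{(L)}=G\cup H) = \sum_{G'} \PP(F_{\tau}^{(L)}=G')\,\PP\bigl(F_{\tau+1}^{(L)}=G\cup H \mid F_{\tau}^{(L)}=G'\bigr),
\]
and similarly for $\tilde L$; then sum over $H\subseteq L\setminus\tilde L$ and try to match the two expressions, invoking the induction hypothesis $\PP(F_{\tau}^{(\tilde L)}=G')=\sum_{H'\subseteq L\setminus\tilde L}\PP(F_{\tau}^{(L)}=G'\cup H')$ for $G'\subseteq\tilde L$.

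The heart of the argument is a combinatorial bookkeeping lemma about how a single step interacts with the decomposition $L = \tilde L \sqcup (L\setminus\tilde L)$. Because $\tilde L$ is contiguous, its links lie within one ``window'' of $S$, and the segments of $\mathcal{L}^{(L)}_{G'}$ (for any $G'\subseteq L$) split cleanly: those segments, or parts of segments, lying inside $\tilde L$ depend only on $G'\cap\tilde L$ and on the boundary links of $\tilde L$, while the remaining segment-pieces lie in $L\setminus\tilde L$. More precisely, I would want to show that if $G'\subseteq L$ with $G'\cap\tilde L = \bar G$, then the collection of segments of $\mathcal{L}^{(L)}_{G'}$ that are contained in $\tilde L$, \emph{restricted appropriately at the two ends of $\tilde L$}, coincides with $\mathcal{L}^{(\tilde L)}_{\bar G}$ --- at least as far as the links strictly inside $\tilde L$ are concerned. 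This lets one factor the one-step transition probability as a product over ``$\tilde L$-internal'' choices and ``external'' choices, so that summing over $H\subseteq L\setminus\tilde L$ marginalises out exactly the external part and leaves the $\tilde L$-transition probability times $\PP(F_\tau^{(L)}=G')$ summed over all $G'$ with $G'\cap\tilde L=\bar G$. Applying the induction hypothesis then collapses this to $\PP(F_\tau^{(\tilde L)}=\bar G)$ times the $\tilde L$-transition, which is $\PP(F_{\tau+1}^{(\tilde L)}=G)$ as desired.

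The main obstacle, and the step I would spend the most care on, is precisely the boundary behaviour: a segment $I$ of $\mathcal{L}^{(L)}_{G'}$ need not lie entirely inside or entirely outside $\tilde L$ --- it can straddle an endpoint of $\tilde L$. When such a segment evolves, choosing a link in the part outside $\tilde L$ versus inside $\tilde L$ versus choosing no link at all are competing alternatives within the \emph{same} independent coordinate, so the factorisation ``internal $\times$ external'' is not literally a product but requires grouping the outcomes correctly before summing over $H$. The clean way to handle this is to sum over $H$ \emph{first} inside that one straddling segment: for the straddling segment, $\sum$ over whether the chosen link (if any) is in $H$ collapses the probabilities $1-\sum_{\alpha\in I}\varrho_\alpha$ and $\{\varrho_\alpha\}_{\alpha\in I\cap(L\setminus\tilde L)}$ into the single quantity $1-\sum_{\alpha\in I\cap\tilde L}\varrho_\alpha$, which is exactly the ``no link chosen'' probability for the corresponding segment of $\mathcal{L}^{(\tilde L)}_{\bar G}$. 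Once this telescoping at the (at most two) straddling segments is verified, the remaining segments are either wholly inside $\tilde L$ (contributing identically to both processes) or wholly outside (contributing a factor that sums to $1$ over the relevant part of $H$), and the induction closes. I would also remark that an alternative, arguably cleaner, route is to deduce the result directly from Prop.~\ref{coro:Sigma_F}: the partitioning process $\{\Sigma_\tau^{(N)}\}$ built from the Wright-Fisher model with links $\tilde L$ is, by construction, the restriction to the sites spanned by $\tilde L$ of the process built from all of $L$ (forgetting the partition structure outside that window), and marginalisation is then inherited in the $N\to\infty$ limit --- but making ``restriction of the partition'' precise still amounts to the same boundary bookkeeping, so I would present the direct inductive proof.
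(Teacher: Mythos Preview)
Your proof is correct and rests on the same core observation as the paper's: for a segment $I\in\mathcal L^{(L)}_{G\cup H}$ that straddles the boundary of $\tilde L$, the outcomes ``pick some $\alpha\in I\setminus\tilde L$'' and ``pick no link'' merge, after restriction to $\tilde L$, into the single outcome ``no link in $I\cap\tilde L$'' with probability $1-\sum_{\alpha\in I\cap\tilde L}\varrho_\alpha$, which is exactly the corresponding probability in the $\tilde L$-process. The paper packages this slightly differently: rather than summing over $H\subseteq L\setminus\tilde L$ and closing an explicit induction on $\tau$, it verifies directly that the conditional law of the one-step increment $A^{(L)}\cap\tilde L$ given $F_\tau^{(L)}=G\cup H$ does not depend on $H$ at all and equals the law of $A^{(\tilde L)}$ given $F_\tau^{(\tilde L)}=G$. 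This is the lumpability criterion showing that $\{F_\tau^{(L)}\cap\tilde L\}_{\tau}$ is itself a Markov chain with the same transition kernel and initial state as $\{F_\tau^{(\tilde L)}\}_{\tau}$, so equality of laws (and hence of one-dimensional marginals) follows immediately without writing out the induction. Your summing-over-$H$ route and the paper's pointwise-in-$H$ route are two sides of the same coin; the paper's is a touch more economical because it sidesteps the explicit induction, while yours makes the boundary telescoping more visibly explicit.
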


\begin{proof}
We will prove the claim by  showing that, for every $G \subseteq \tilde L$ and 
$H \subseteq L \setminus \tilde L$,  the set $A^{(\tilde L)}$ of links 
picked from
the segments of $\mathcal L^{(\tilde L)}_G$ if $F_{\tau}^{(\tilde L)}=G$
follows the same law as $A^{(L)}\cap \tilde L$, where $A^{(L)}$ is the set of links picked from
the segments of $\mathcal L^{(L)}_{G\cup H}$  if $F_{\tau}^{(L)}=G\cup H$
(according to \eqref{segmentprocess} and \eqref{segmentprocess2},
and likewise for $\tilde L$).
Due to the independence of the segments (within $\mathcal L^{(\tilde L)}_G$ 
and within $\mathcal L^{(L)}_{G\cup H}$), we may consider these segments
separately.
Segments that are contained in both $\mathcal L^{(\tilde L)}_G$ and 
$\mathcal L^{(L)}_{G\cup H}$ contribute identically to
$A^{(\tilde L)}$ and $A^{(L)}\cap \tilde L$
by construction.
Segments $I \in \mathcal L^{(L)}_{G\cup H}$ with $I \cap \tilde L = \varnothing$
do not contribute to $A^{(L)}\cap \tilde L$ and are independent of those
in $\mathcal L^{(\tilde L)}_G$ and thus of $A^{(\tilde L)}$.
We are left to consider segments $\tilde I \in \mathcal L^{(\tilde L)}_G$
with $\tilde I \subsetneq  I \in \mathcal L^{(L)}_{G\cup H}$ for some $I$.
But here the probability to pick any $\alpha \in \tilde I$ 
(for $A^{(\tilde L)}$ ) is the same as
picking this $\alpha$ from $I$ for $A^{(L)}\cap \tilde L$,
namely, $\varrho_{\alpha}$, which completes the proof. \qed
\end{proof}


We are now ready to state the main result of this section.
\begin{theorem}[type distribution via ancestral process]\label{thm:typedistr}
Consider a sequence of Wright-Fisher models with single-crossover
recombination and increasing population size $N$. 
Let the initial states be such that
$\lim_{N\rightarrow\infty} \widehat{Z}_0^{(N)} = p^{}_0$.
The type distribution of any given individual for any finite 
$t \in \NN_0$ converges to
$\sum_{G \subseteq L} \PP(F_t=G) R_G(p^{}_0)$ 
as $N \to \infty$. For the
composition of the population, we have 
\begin{equation}\label{eq:LLN-back}
\lim_{N \to \infty}  \widehat Z_t^{(N)} = \sum_{G \subseteq L} 
\PP(F_t=G) R_G(p_0) \quad \text{in mean square}. 
\end{equation}
\end{theorem}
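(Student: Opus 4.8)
The plan is to combine the two bridges established above: the forward law of large numbers (Prop.~\ref{prop:IPL}) links $\widehat Z_t^{(N)}$ to $\varPhi^t(p^{}_0) = \sum_{G \subseteq L} a^{}_G(t) R^{}_G(p^{}_0)$ in the ansatz form \eqref{ansatzsol}, while the backward analysis (construction (A1)--(A3), Lemma~\ref{lem:refine} and Prop.~\ref{coro:Sigma_F}) links the same quantity to the segmentation process. So the heart of the matter is to prove the identity
\[
a^{}_G(t) = \PP(F_t = G) \qquad \text{for all } G \subseteq L,\ t \in \NN_0 .
\]
Once this is in hand, \eqref{eq:LLN-back} is immediate from Prop.~\ref{prop:IPL} together with \eqref{ansatzsol}, and the statement about the type distribution of a single individual follows from \eqref{eq:typedistr} and \eqref{eq:Sigma_F_t-1}: conditioning on $(\Sigma^{(N)})'_{t-1} = \cS(G)$ gives the type a product distribution $:\!\bigotimes_j \pi^{}_{\sigma_j}.\widehat Z_0^{(N)}\!: \,=\, R^{}_G(\widehat Z_0^{(N)})$ (by the very definition \eqref{eq:recombinator} of the recombinator and the parts \eqref{eq:parts}), so the type distribution is $\sum_{\sigma} \PP((\Sigma^{(N)})'_{t-1}=\sigma)\, R^{}_{\psi(\sigma)}(\widehat Z_0^{(N)})$ up to non-ordered partitions; by \eqref{eq:Sigma_F_t-1} this converges to $\sum_{G \subseteq L} \PP(F_t = G)\, R^{}_G(p^{}_0)$ as $N \to \infty$, using continuity of $R^{}_G$ and $\widehat Z_0^{(N)} \to p^{}_0$.

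\textbf{Proving $a^{}_G(t) = \PP(F_t = G)$.} I would do this by induction on $t$, showing that $b^{}_G(t) := \PP(F_t = G)$ satisfies the same recursion \eqref{nonlinrecur} with the same initial condition $b^{}_G(0) = \delta^{}_{G,\varnothing}$ as $a^{}_G(t)$ (the initial condition holds since $F_0 = \varnothing$). For the inductive step, condition on $F_t$ and use Def.~\ref{defin:segmentation}: $F_{t+1} = F_t \cup A$ where $A$ picks at most one link from each segment $I \in \mathcal L^{}_{F_t}$, independently across segments. To have $F_{t+1} = G$, either $F_t = G$ and no link is picked anywhere — but the probability that segment $I$ contributes nothing is $1 - \sum_{\alpha \in I}\varrho^{}_\alpha$, and multiplying over $I \in \mathcal L^{}_G$ gives exactly $1 - \sum_{\alpha \in L \setminus G}\varrho^{}_\alpha$, which is \emph{not} the coefficient $1 - \sum_{\alpha \in L}\varrho^{}_\alpha$ appearing in \eqref{nonlinrecur} — so the bookkeeping must be organised by the \emph{last} link cut, matching the structure of the recursion. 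Concretely: classify transitions into $F_{t+1} = G$ by which link $\alpha \in G$ is the one lying in the segment of $\mathcal L^{}_{F_t}$ that gets cut at step $t+1$; if $\alpha$ is cut from segment $I$ at this step, then before the step $F_t$ must have been a set containing $G^{}_{<\alpha} \cup G^{}_{>\alpha}$, with its part of $\mathcal L$ that contains $\alpha$ being precisely the segment spanning from (the link after) the largest element of $G^{}_{<\alpha}$ to (the link before) the smallest element of $G^{}_{>\alpha}$ — but that segment may itself have been further cut on \emph{either side} of $\alpha$ at earlier times. This is where the marginalisation property (Prop.~\ref{prop:marg}) and the conditional independence of segments enter: the cuts to the left of $\alpha$ and to the right of $\alpha$ within that maximal segment are independent, and by marginalisation each factorises as a segmentation process on the corresponding contiguous sublink-set, so that
\[
\PP(F_t = G^{}_{<\alpha} \cup H' \cup G^{}_{>\alpha} \cup K')
= \PP\bigl(F_t^{(\tilde L_{\geqslant \alpha})} = \dots\bigr)\cdot \PP\bigl(F_t^{(\tilde L_{\leqslant \alpha})} = \dots\bigr)
\]
and summing over the admissible $H' \subseteq L^{}_{\geqslant \alpha}$ and $K' \subseteq L^{}_{\leqslant \alpha}$ recombines (via Prop.~\ref{prop:marg} again) into the product $\bigl(\sum_{H \subseteq L^{}_{\geqslant \alpha}} b^{}_{G^{}_{<\alpha}\cup H}(t)\bigr)\bigl(\sum_{K \subseteq L^{}_{\leqslant \alpha}} b^{}_{K \cup G^{}_{>\alpha}}(t)\bigr)$ appearing in \eqref{nonlinrecur}. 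Summing over $\alpha \in G$ and adding the $F_t = G$, no-cut-in-$G$'s-segment contribution (which, after the factorisation, reassembles with the $\alpha$-terms exactly as in \eqref{nonlinrecur}) closes the induction.

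\textbf{Main obstacle.} The combinatorial bookkeeping in the inductive step is the delicate part: one must correctly identify, for each target $G$ and each ``last-cut link'' $\alpha \in G$, the family of possible predecessors $F_t$ and show that the segments of $\mathcal L^{}_{F_t}$ factor cleanly into an independent ``left of $\alpha$'' part and ``right of $\alpha$'' part whose marginal laws reproduce the two bracketed sums in \eqref{nonlinrecur}. The marginalisation property (Prop.~\ref{prop:marg}) and the conditional independence of segments built into Def.~\ref{defin:segmentation} are exactly what make this factorisation legitimate — this is the ``golden key'' alluded to in the text — but matching it term by term to the nonlinear recursion of Theorem~\ref{thm:adevelop} requires care with the link orderings and with the segment that straddles $\alpha$. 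Everything else (passing from the single-individual statement to the population statement \eqref{eq:LLN-back} via Prop.~\ref{prop:IPL}, and the $\cO(1/N)$ control from Lemma~\ref{lem:refine} and Prop.~\ref{coro:Sigma_F}) is routine.
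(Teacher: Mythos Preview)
Your overall strategy---first prove the identity $a_G(t)=\PP(F_t=G)$ and then read off \eqref{eq:LLN-back} from Prop.~\ref{prop:IPL} together with \eqref{ansatzsol}---is a legitimate route, and the paper in fact acknowledges it explicitly (see the remark following the second proof of Theorem~\ref{thm:segisa}). The paper, however, goes the other way: it proves Theorem~\ref{thm:typedistr} \emph{directly}, without using Theorem~\ref{thm:adevelop} or \eqref{ansatzsol} at all. The single-individual part is as you describe; for the population part, the paper bounds $\mathrm{Cov}\bigl(\one\{\Sigma'_{t-1,1}=\sigma,\Xi_{t,1}=x\},\one\{\Sigma'_{t-1,2}=\sigma,\Xi_{t,2}=x\}\bigr)$ by $\cO(1/N)$ via the event ``no coalescence between the ancestries of individuals $1$ and $2$'', and then applies the mean-square law of large numbers for exchangeable summands. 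The advantage of the paper's route is that it is self-contained on the stochastic side and does not rely on the (nontrivial) deterministic structure result \eqref{ansatzsol}; your route is shorter once that structure is taken as given.

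The genuine gap in your proposal is the verification of $a_G(t)=\PP(F_t=G)$ itself. You try to match \eqref{nonlinrecur} by conditioning on $F_t$ and organising by a ``last link cut'' $\alpha$, and you then assert a factorisation of $\PP(F_t^{(L)}=G_{<\alpha}\cup H'\cup G_{>\alpha}\cup K')$ into a product over $\tilde L_{\leqslant\alpha}$ and $\tilde L_{\geqslant\alpha}$. That factorisation is false: the process $\{F_\tau^{(L)}\}$ does \emph{not} split into independent pieces across $\alpha$ until $\alpha$ has actually been cut, and at time $t$ you have no control over when (or whether) that happened. (Also, the product $\prod_{I\in\mathcal L_G}(1-\sum_{\alpha\in I}\varrho_\alpha)$ equals $\lambda_G$, not $1-\sum_{\alpha\in L\setminus G}\varrho_\alpha$ as you wrote.) The clean way---and this is the paper's second proof of Theorem~\ref{thm:segisa}---is the \emph{opposite} decomposition: condition on the \emph{first} step, i.e.\ on $F_1\in\{\varnothing\}\cup\{\{\alpha\}:\alpha\in L\}$. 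Given $F_1=\{\alpha\}$ the two resulting segments are genuinely independent by Definition~\ref{defin:segmentation}, so $\PP(F_{\tau+1}^{(L)}=G\mid F_1=\{\alpha\})=\PP(F_\tau^{(L_{<\alpha})}=G_{<\alpha})\,\PP(F_\tau^{(L_{>\alpha})}=G_{>\alpha})$, and now Prop.~\ref{prop:marg} turns each factor into the bracketed marginal sum in \eqref{nonlinrecur}. In short: the ``golden key'' of conditional independence unlocks the recursion only after the \emph{first} cut, not before the last one.
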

Clearly, \eqref{eq:LLN-back} is again a law of large numbers,
analogous to the infinite-population limit of Prop.~\ref{prop:IPL},
but this time expressed in terms of the \emph{backward} process; the
connection will be exploited in the next section. 
As in Remark \ref{rem:interval}, the result again carries over to
finite time intervals. 

\begin{proof}[of Theorem \ref{thm:typedistr}]
We prove the theorem by considering the joint distribution of
partitions and types, $\Sigma'_{t-1}$ and $\Xi_t$,   as in \eqref{eq:typedistr}.
We will omit the dependence on
$N$ for ease of notation.
For a given single individual, 
we first  rewrite the joint probabilities as
\begin{equation} \label{eq:cond1}
\PP(\Sigma'_{t-1}=\sigma, \Xi_t=x) =
  \PP(\Sigma_{t-1}'=\sigma) \, \PP(\Xi_t=x  \mid  \Sigma_{t-1}'=\sigma).
\end{equation}
As to the first term on the right-hand side,
recall that, by \eqref{eq:Sigma_F_t-1}, the only partitions 
that survive in the limit are $\cS(G)$, $G \subseteq L$,
for which  $\PP(\Sigma_{t-1}'=\cS(G)) = 
\PP(F_t=G) + \cO(1/N)$. As to the second term, 
\eqref{eq:typedistr}   tells
us that the  type distribution 
corresponding to $\cS(G)$ is
\[
(\pi_{\sigma_1}.\widehat Z_0) \otimes \dots \otimes (\pi_{\sigma_{|G|+1}}.\widehat Z_0)
=R_G(\widehat Z_0)
\] 
with $\sigma_1, \ldots, \sigma_{|G|+1}$
of \eqref{eq:parts}. But $R_G(\widehat Z_0)$
converges to $R_G(p_0)$
by assumption. By Lemma \ref{lem:refine}, 
in the limit $N \to \infty$, \eqref{eq:cond1} thus
becomes
\begin{equation}\label{single_limit}
\lim_{N \to \infty}
\PP(\Sigma'_{t-1}=\sigma, \Xi_t=x) =
\begin{cases} \PP(F_t=G) (R_G(p_0))(x) & \text{if } 
                          \sigma=\cS(G) \\
              0 & \text{otherwise},
\end{cases}
\end{equation}
from which the type distribution for single individuals
follows by marginalisation over $\Sigma'_{t-1}$.

As to the population, let individuals be numbered $1,2,\ldots,N$,
and let $\Sigma'_{t-1,i}$ and $\Xi_{t,i}$ be the  partition
at (backward) time $t\!-\!1$ (after splitting), and the type at (forward) time $t$,
respectively, of individual $i$, $1 \leqslant i \leqslant N$ 
(so that the above
$\Sigma'_{t-1}$ and  $\Xi_{t}$ may be identified with  
$\Sigma'_{t-1,1}$ and  $\Xi_{t,1}$). The $\Sigma'_{t-1,i}$ 
are identically distributed (across $i$), but not independent
(they are correlated due to common ancestry); the same holds
for the $\Xi_{t,i}$. Clearly,
\[
\widehat Z_t(x) = \frac{1}{N} \sum_{i=1}^N \one \{\Xi_{t,i}=x\}, \quad x \in X,
\]
where $\one \{\ldots\}$ denotes the indicator function for the event
in question.
We will show that, for all $x \in X$ and $\sigma \in \Pi(S)$,
\begin{equation} \label{eq:willshow}
\lim_{N \to \infty} \Big (\frac{1}{N} 
\sum_{i=1}^N \one \{\Sigma'_{t-1,i}=\sigma,\Xi_{t,i}=x\}
- \PP(\Sigma'_{t-1,1}=\sigma,\Xi_{t,1}=x) \Big ) =0
\end{equation}
in mean square. Eq.~\eqref{eq:LLN-back} then follows via summation over
all $\sigma \in \Pi(S)$, together with the result for single
indviduals, which tells us that $\PP(\{\Xi_{t,1}=x\})
\xrightarrow{N \to \infty} \PP(F_t=G) \big (R_G(p_0)\big )(x)$.

To establish \eqref{eq:willshow}, it is
sufficient to show that the covariance of 
$\one \{\Sigma'_{t-1,1}=\sigma,\Xi_{t,1}=x\}$ and
$\one \{\Sigma'_{t-1,2}=\sigma,\Xi_{t,2}=x\}$ is $\cO(1/N)$;
due to exchangeability, this then carries over to arbitrary pairs
of individuals. 
Let $\widetilde \Omega_t$
be the event that no coalescence happens between ancestors of
individual 1 and those of individual 2 until time $t$
(while coalescences between ancestors
of the same individual are allowed).  
In analogy with \eqref{eq:q}, the 
probability of no such coalescence in a single time step  is bounded
from below by
\begin{equation}\label{eq:tildeq}
\tilde q := \Big ( 1-\frac{n+1}{N} \Big)^{n+1} = 1-\frac{(n+1)^2}{N} 
+ \cO(1/N^2)
\end{equation}
since each individual has at most $|S|=n+1$ ancestors.
Thus 
\[
\mathbb{P}(\widetilde \Omega_t) \geqslant \tilde q^t=
1 -\frac{(n+1)^2t}{N} + \cO(1/N^2)
\]
for every finite $t$ as $N\to \infty$.
We now consider $\{\Sigma'_{\tau,1},\Sigma'_{\tau,2} \, \mid \, 
\widetilde \Omega_{t-1}\}_{0 \leqslant \tau \leqslant t-1}$. 
Arguing in a similar way as
in the proof of Prop.~\ref{coro:Sigma_F},
we find that the law of the conditional joint process
agrees with that of two \emph{independent} copies of 
$\{\Sigma'_{\tau}\}_{0 \leqslant \tau \leqslant t-1}$
as long as there is no
common ancestry between parts of $\Sigma'_{\tau,1}$
and those of $\Sigma'_{\tau,2}$. This is the case with probability
$\PP(\tilde \Omega_{t-1})$, which deviates from $1$ by $\cO(1/N)$, so that
\begin{equation}\label{eq:sigma}
\PP (\Sigma'_{t-1,1}=\sigma,\Sigma'_{t-1,2}=\sigma 
 \mid  \widetilde \Omega_{t-1})  =
\big (\PP(\Sigma'_{t-1,1}=\sigma ) \big )^2 + \cO(1/N). 
\end{equation}
Next, on $\widetilde \Omega_{t-1}$, the parts of $\Sigma'_{t-1,1}$ 
and of $\Sigma'_{t-1,2}$
pick their types \emph{independently}, so that 
\begin{equation}\label{eq:x}
\PP(\Xi_{t,1}=x,\Xi_{t,2}=x  \mid 
\Sigma'_{t-1,1}=\sigma,\Sigma'_{t-1,2}=\sigma, \widetilde \Omega_{t-1}) =
\big ( \PP(\Xi_{t,1}=x  \mid 
\Sigma'_{t-1,1}=\sigma)\big )^2. 
\end{equation} 
Taking together $1-\PP(\widetilde \Omega_{t-1})=\cO(1/N)$,
\eqref{eq:sigma} and \eqref{eq:x}, we get
\[
\begin{split}
 \PP (\Sigma'_{t-1,1}=\sigma, & \Sigma'_{t-1,2}=\sigma, \Xi_{t,1}=x, \Xi_{t,2}=x )  \\
& =  \PP (\Sigma'_{t-1,1}=\sigma,\Sigma'_{t-1,2}=\sigma,\Xi_{t,1}=x, \Xi_{t,2}=x \mid
\widetilde \Omega_{t-1}) + \cO(1/N)  \\
& =  \PP (\Xi_{t,1}=x, \Xi_{t,2}=x \mid
\Sigma'_{t-1,1}=\sigma,\Sigma'_{t-1,2}=\sigma,\widetilde \Omega_{t-1}) \\
& \qquad \qquad \times \PP (\Sigma'_{t-1,1}=\sigma,\Sigma'_{t-1,2}=\sigma \mid  \widetilde \Omega_{t-1})
+ \cO(1/N)  \\
 & =  \big ( \PP (\Xi_{t,1}=x, \Sigma'_{t-1,1}=\sigma ) \big )^2 + \cO(1/N),
\end{split} 
\]
so that indeed
\[
\text{Cov}(\one \{\Sigma'_{t-1,1}=\sigma, \Xi_{t,1}=x\},
\one \{\Sigma'_{t-1,2}=\sigma, \Xi_{t,2}=x\}) = \cO(1/N),
\]
which establishes \eqref{eq:willshow} and proves the claim. \qed
\end{proof}

\subsection{Connection with the deterministic dynamical system}
A main result now is
\begin{theorem}\label{thm:segisa}
 For all $G\subseteq L$ and all $\tau\geqslant 0$, we have 
\[
 \mathbb{P}(F_{\tau} =G) = a_G(\tau) \, .
\]
\end{theorem}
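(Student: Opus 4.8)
The plan is to prove the identity $\PP(F_\tau = G) = a_G(\tau)$ by induction on $\tau$, showing that the probabilities $\PP(F_\tau = G)$ satisfy exactly the same recursion \eqref{nonlinrecur} and initial condition that characterise the $a_G(\tau)$ in Theorem~\ref{thm:adevelop}. Since that recursion has a unique solution, this suffices. The base case is immediate: $F_0 = \varnothing$ with probability one, so $\PP(F_0 = G) = \delta_{G,\varnothing} = a_G(0)$.

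For the induction step, I would fix $\tau$, condition on the state $F_\tau = G'$ for each $G' \subseteq L$, and use the transition rule of Definition~\ref{defin:segmentation} to express $\PP(F_{\tau+1} = G)$ as a sum over those $G'$ from which $G$ is reachable in one step. By construction, $F_{\tau+1} = G$ requires $G' \subseteq G$ and $G \setminus G'$ to consist of at most one link per segment of $\mathcal{L}_{G'}$. There are two contributions: either no link is cut in any segment (this requires $G' = G$ and contributes the factor $\prod_{I \in \mathcal{L}_G}(1 - \sum_{\alpha \in I}\varrho_\alpha)$, which after expansion gives the linear term $(1 - \sum_{\alpha \in L}\varrho_\alpha)$ times lower-order sums — here one must be slightly careful), or exactly one link $\alpha \in G$ is freshly cut. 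The cleaner route is to reorganise the sum by conditioning on which link $\alpha \in G$ was cut \emph{last} (or, equivalently, which link of $G$ is the newly added one), which is the single link lying in the segment of $\mathcal{L}_{G \setminus \{\alpha\}}$ that gets split. This is where the marginalisation property, Proposition~\ref{prop:marg}, does the real work: conditional on $F_\tau = G'$, the segmentation process restricted to the part of $L$ before $\alpha$ and the part after $\alpha$ run \emph{independently} once $\alpha$ has been cut, and each is a marginal of a segmentation process on a contiguous sublist of links.

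Concretely, I expect the key algebraic step to go as follows. For a fixed $\alpha \in G$, write $G = G_{<\alpha} \cup \{\alpha\} \cup G_{>\alpha}$. Using the marginalisation property and the product structure of the transition, the probability of arriving at $G$ via cutting $\alpha$ from a state not yet containing $\alpha$ should factor as $\varrho_\alpha$ times a ``left'' factor — the probability, in the full process, that the links strictly below $\alpha$ that have been cut are exactly $G_{<\alpha}$, which by marginalisation equals $\sum_{H \subseteq L_{\geqslant\alpha}} \PP(F_\tau = G_{<\alpha} \cup H)$ — times a corresponding ``right'' factor $\sum_{K \subseteq L_{\leqslant\alpha}}\PP(F_\tau = K \cup G_{>\alpha})$. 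Summing over $\alpha \in G$ and adding the no-cut term $(1 - \sum_\alpha \varrho_\alpha)\PP(F_\tau = G)$ reproduces \eqref{nonlinrecur} verbatim with $a_G$ replaced by $\PP(F_\cdot = G)$. One then invokes uniqueness of the solution to the recursion (guaranteed since it expresses $a_G(\tau+1)$ explicitly in terms of the values at $\tau$) to conclude $\PP(F_\tau = G) = a_G(\tau)$ for all $\tau$.

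The main obstacle, I anticipate, is making the ``left/right decoupling after cutting $\alpha$'' rigorous: one must check that the set of links in $G_{<\alpha}$ together with all links below $\alpha$ that could additionally have been cut, and symmetrically above $\alpha$, are governed by two independent marginal segmentation processes on contiguous sublists, and that cutting $\alpha$ at step $\tau \to \tau+1$ is consistent with $\alpha$ lying in a single segment of $\mathcal{L}_{G \setminus \{\alpha\}}$ whose recombination probability for $\alpha$ is precisely $\varrho_\alpha$ regardless of the surrounding configuration. This is exactly the content that Proposition~\ref{prop:marg} and the segment-independence in Definition~\ref{defin:segmentation} were set up to deliver, so the proof should be short; the bookkeeping of which subsets $H \subseteq L_{\geqslant\alpha}$ and $K \subseteq L_{\leqslant\alpha}$ appear — and verifying they match the index sets in \eqref{nonlinrecur} — is the part requiring the most care. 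An alternative, essentially equivalent, approach would be to appeal to Theorem~\ref{thm:typedistr}: comparing $\lim_N \widehat Z_t^{(N)} = \sum_G \PP(F_t = G) R_G(p_0)$ with $p_t = \sum_G a_G(t) R_G(p_0)$ from \eqref{ansatzsol}, and using the linear independence of the measures $\{R_G(p_0)\}_{G \subseteq L}$ for generic $p_0$, forces $\PP(F_t = G) = a_G(t)$; I would mention this as a remark but prefer the direct recursion-matching proof as the primary argument since it does not hinge on a genericity assumption about $p_0$.
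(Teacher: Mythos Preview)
Your alternative route—comparing Theorem~\ref{thm:typedistr} with \eqref{ansatzsol} and invoking linear independence of the $R_G(p_0)$ for generic $p_0$—is exactly the paper's first proof. Your primary route, showing that $\PP(F_\tau=G)$ satisfies the recursion~\eqref{nonlinrecur}, is the paper's second proof in spirit, but the implementation you sketch contains a genuine error.

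The problem is that you condition on the \emph{last} step ($F_\tau\to F_{\tau+1}$), whereas recursion~\eqref{nonlinrecur} arises from conditioning on the \emph{first} step ($F_0\to F_1$). These are not interchangeable here. Concretely: the ``no-cut'' probability from state $G$ is $\lambda_G=\prod_{I\in\mathcal L_G}(1-\sum_{\alpha\in I}\varrho_\alpha)$, not $1-\sum_{\alpha\in L}\varrho_\alpha$; the latter is the no-cut probability from state $\varnothing$, i.e.\ at the first step. More seriously, your claimed left/right factorisation fails for the last step. If $\alpha$ is cut at step $\tau\to\tau+1$, then \emph{before} that step $\alpha$ has not yet separated $L_{<\alpha}$ from $L_{>\alpha}$, so the events $\{F_\tau\cap L_{<\alpha}=G_{<\alpha}\}$ and $\{F_\tau\cap L_{>\alpha}=G_{>\alpha}\}$ are \emph{not} independent, and Proposition~\ref{prop:marg} gives only their marginals, not a product. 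A small check: take $L=\{\tfrac12,\tfrac32,\tfrac52\}$, $G=\{\tfrac12,\tfrac32\}$, $\alpha=\tfrac12$. The true last-step contribution via cutting $\tfrac12$ is $\varrho_{1/2}(1-\varrho_{5/2})\,\PP(F_\tau=\{\tfrac32\})$, whereas your factorised expression gives $\varrho_{1/2}\bigl(\PP(F_\tau=\{\tfrac32\})+\PP(F_\tau=\{\tfrac12,\tfrac32\})\bigr)$. The last-step analysis does yield a correct (linear, Kolmogorov-forward) recursion, but it is not~\eqref{nonlinrecur}.

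The fix is simple and is what the paper does: decompose according to the \emph{first} step. Write
\[
\PP(F^{(L)}_{\tau+1}=G)=\sum_{H\subseteq G}\PP(F^{(L)}_1=H)\,\PP(F^{(L)}_{\tau+1}=G\mid F^{(L)}_1=H).
\]
Since $F_0=\varnothing$, only $H=\varnothing$ (probability $1-\sum_{\alpha\in L}\varrho_\alpha$) or $H=\{\alpha\}$ (probability $\varrho_\alpha$) occur. After the first cut at $\alpha$, the two halves $L_{<\alpha}$ and $L_{>\alpha}$ evolve as \emph{independent} segmentation processes for the remaining $\tau$ steps, giving $\PP(F^{(L_{<\alpha})}_\tau=G_{<\alpha})\,\PP(F^{(L_{>\alpha})}_\tau=G_{>\alpha})$; Proposition~\ref{prop:marg} then rewrites each factor as the bracketed sums in~\eqref{nonlinrecur}. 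This is precisely where the conditional independence of segments does its work—\emph{after} separation, not before.
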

We give two proofs that result in different and mutually complementary insight.
The first uses a general argument, the second
a concrete calculation.
\begin{proof}[First proof of Theorem $\ref{thm:segisa}$]
  Compare the two laws of large numbers, Prop.~\ref{prop:IPL} and
  Theorem~\ref{thm:typedistr}. The claim is obvious via comparison of
  coefficients. The latter is justified by the
  following observation (cf.\ the argument in the proof of Theorem~3 in
  \citealt{discretereco}). For generic $p_0$ and generic $X_i$, the
  vectors $R_G(p_0)$ with $G \subseteq L$ are the extremal vectors of
  the closed simplex $\text{conv}\{R_K (p_0) \mid K \subseteq L\}$,
where $\text{conv}$ denotes the convex hull. They
  are the vectors that (generically) cannot be expressed as
  non-trivial convex combination within the simplex, and hence the
  vertices of the simplex (in cases with degeneracies, one reduces the
  simplex in the obvious way). \qed
\end{proof}

\begin{proof}[Second proof of Theorem $\ref{thm:segisa}$]
We simply show that $a_G(\tau)$ and $\PP(F_{\tau}=G)$ follow the same recursions
(with the same initial values).
To this end, recall that we have implied
\[
\mathbb{P}(F_{\tau} =G) = \mathbb{P}(F_{\tau} =G \mid F_0 = \varnothing).
\]
We then decompose the $\tau\!+\!1$ time steps into the initial step, followed
by an interval of $\tau$ steps (in the spirit of the Kolmogorov
backward equation)
to obtain
\begin{equation}\label{eq:segprocess}
\begin{split}
 \mathbb{P}(F^{(L)}_{\tau+1}&=G) = \mathbb{P}(F^{(L)}_{\tau+1} = G \mid F^{(L)}_0=\varnothing)
  = \sum_{H\subseteq G} \mathbb{P}(F^{(L)}_1 = H \mid F^{(L)}_0 =\varnothing)
\mathbb{P}(F^{(L)}_{\tau}=G \mid F^{(L)}_0 =H) \\
&= \PP(F_1^{(L)}=\varnothing\mid F_0^{(L)}=\varnothing) \mathbb{P}(F^{(L)}_{\tau}=G \mid F^{(L)}_0 =\varnothing) \\
& \qquad  + \sum_{\alpha\in G} \mathbb{P}(F^{(L)}_1 = \{\alpha\} \mid F^{(L)}_0 =\varnothing)
\mathbb{P}(F^{(L)}_{\tau} = G \mid F^{(L)}_0 = \{\alpha\}) \\
&=  (1-\sum_{\alpha \in L} \varrho_{\alpha}) \mathbb{P}(F^{(L)}_{\tau}=G)
+ \sum_{\alpha\in G} \varrho^{}_{\alpha} \mathbb{P}(F_{\tau}^{(L_{<\alpha})} = G_{<\alpha})
\mathbb{P}(F_{\tau}^{(L_{>\alpha})} = G_{>\alpha}) \, .
\end{split}
\end{equation}
In the last step, we have used
$\mathbb{P}(F^{(L)}_{\tau} = G\vert F^{(L)}_0 = \{\alpha\}) =
 \mathbb{P}(F_{\tau}^{(L_{<\alpha})} = G_{<\alpha})
\mathbb{P}(F_{\tau}^{(L_{>\alpha})} = G_{>\alpha})$, which is due to the conditional
independence of segments according to Def.~\ref{defin:segmentation}.
Knowing Prop.~\ref{prop:marg} (for $L'=L_{< \alpha}$
and hence $L \setminus L' = L_{>\alpha}$), 
the recursion~\eqref{eq:segprocess} is identical to the one
in \eqref{nonlinrecur}; together with the identity of the initial
conditions,
\[
a^{(L)}_G(0) = \mathbb{P}(F^{(L)}_0 =G) = \delta^{}_{G,\varnothing},
\]
this proves the claim.  \qed
\end{proof}

It is important to note that the second proof does not rely on Theorem 
\ref{thm:typedistr}.
Theorem \ref{thm:segisa} could thus be used to establish Theorem 
\ref{thm:typedistr}
 by simply invoking
Prop.~\ref{prop:IPL} and the deterministic time evolution \eqref{ansatzsol}. However,
the independent proof of Theorem 2 bears the great
advantage that it only requires the stochastic arguments derived
in the current paper, thus making the argument self-contained
and independent of the knowledge of the deterministic dynamics
developed  in previous work via quite a different toolbox.

\subsection{Towards an explicit solution -- preparation and example}
Before we proceed, let us 
consider an important aspect of the segmentation process, 
namely, the probability
that nothing happens in one time step
given the current state is $G$. For any contiguous $\tilde L \subseteq L$,
let
\begin{equation}\label{lambdacoeff}
  \lambda^{(\tilde L)}_G := \mathbb{P}(F^{(\tilde L)}_{\tau+1} = G 
  \mid F^{(\tilde L)}_{\tau} = G) = 
  \prod_{I\in \mathcal{L}^{(\tilde L)}_G} (1-\sum_{\alpha\in I} \varrho^{}_{\alpha}), 
  \quad\text{ for } G\subseteq \tilde L \,.
\end{equation}
As before, we will omit  the upper index in the case of $\tilde L = L$,
that is, $\lambda^{(L)}_G = \lambda_G$. 
Since for every $I\in \mathcal{L}^{(\tilde L)}_G$ one has 
$\mathcal{L}^{(I)}_{\varnothing} = \{I\}$,
and thus $\lambda^{(I)}_{\varnothing}= 1-\sum_{\alpha\in I} \varrho^{}_{\alpha}$,
we can rewrite \eqref{lambdacoeff} as
\begin{equation}\label{lamkoeff}
  \lambda^{(\tilde L)}_G=
  \prod_{I\in \mathcal{L}^{(\tilde L)}_G}\lambda^{(I)}_{\varnothing} \, .
\end{equation}
The coefficients $\lambda_G$ have already been identified by
\cite{Bennett},  \cite{Lyubich}, and 
\cite{Kevin1,Kevin2}, as well as by \cite{discretereco} as the
generalised eigenvalues of the linearised deterministic dynamics.

Our aim now is to find a closed-form expression for  $\PP(F_{\tau}=G)$
for all $\tau$. Clearly, $\PP(F_{\tau}=G)$
is the sum over the probabilities for all paths that lead to the state
$F_{\tau}=G$. Each path of the segmentation process may be represented by
a tree, which we will call \emph{ancestral recombination tree} or
ART for short; 
we thus have to sum over the corresponding trees. Considering the trees
carefully will be the key to the solution. Let us illustrate this
by means of an example.
\begin{example}\label{ex:a13}
For four sites $S=\{0,1,2,3\}$ with the corresponding links $L=\{\frac{1}{2},\frac{3}{2},\frac{5}{2}\}$,
we consider $\mathbb{P}(F^{}_{\tau} = \{\frac{1}{2},\frac{3}{2}\})$,
as illustrated in Figure~\ref{fig:a13t}.
That is, we are concerned with all paths of the segmentation process
that lead to $F^{}_{\tau} = \{\frac{1}{2},\frac{3}{2}\}$.
The left tree captures the
path where link $\frac{1}{2}$
is the first to be cut, the second that link $\frac{3}{2}$ is cut first.
The $\lambda_G^j$ are the probabilities that nothing happens
to any of the current segments for $j$ time steps.
In the case where $\frac{3}{2}$ is the first 
event, the additional factor $\lambda^{(\{\frac{5}{2}\})}_{\varnothing}=(1-\varrho^{}_{\frac{5}{2}})$ 
is required to
guarantee that at the time of the second segmentation event (at link $\frac{1}{2}$),
the  segment that belongs to link $\frac{5}{2}$ remains unchanged
(the corresponding term in the other case is 
$\lambda^{(\varnothing)}_{\varnothing}=1$).
Finally, summing over all possible time combinations, one obtains
\begin{equation}\label{ex13eq}
\begin{split}
 \mathbb{P}(F^{}_{\tau}  = \{\tfrac{1}{2},\tfrac{3}{2}\}) & =  \varrho^{}_{\frac{1}{2}} \varrho^{}_{\frac{3}{2}}
        \sum_{k=0}^{\tau-2}\lambda^{k}_{\varnothing}\sum_{i=0}^{\tau-2-k}\lambda^{i}_{\frac{1}{2}}
                    \lambda^{{\tau-2-k-i}}_{\{\frac{1}{2},\frac{3}{2}\}} \\
  & + \varrho^{}_{\frac{1}{2}} \varrho^{}_{\frac{3}{2}} (1-\varrho^{}_{\frac{5}{2}})
        \sum_{k=0}^{\tau-2}\lambda^{k}_{\varnothing}\!\sum_{i=0}^{\tau-2-k}\lambda^{i}_{\frac{3}{2}}
                    \lambda^{\tau-2-k-i}_{\{\frac{1}{2},\frac{3}{2}\}} \, ,
\end{split}
\end{equation}
where the first double sum belongs to the left, the second to the right tree
of Figure~\ref{fig:a13t}.
Unsurprisingly, the same result is obtained by explicitly solving~\eqref{nonlinrecur}
(by the method established by \citealt{discretereco}), which demonstrates
once more that
$\mathbb{P}(F^{}_{\tau} = \{\tfrac{1}{2},\tfrac{3}{2}\}) = a^{}_{\{\frac{1}{2},\frac{3}{2}\}}(\tau)$, in line with Theorem~\ref{thm:segisa}.
\begin{figure}[h]
\begin{center}
  \includegraphics[width=5.5cm,height=4.5cm]{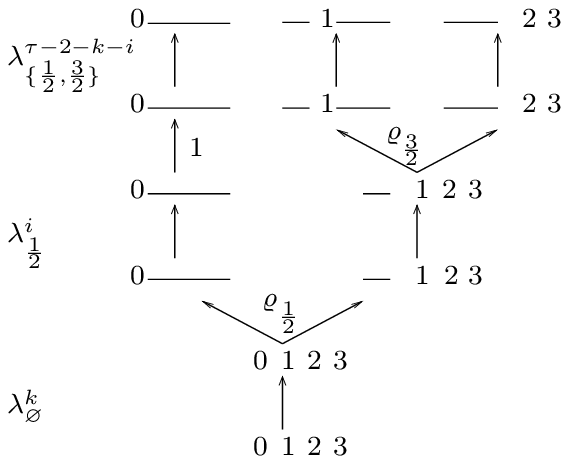}
  \qquad \quad
  \includegraphics[width=5.5cm,height=4.5cm]{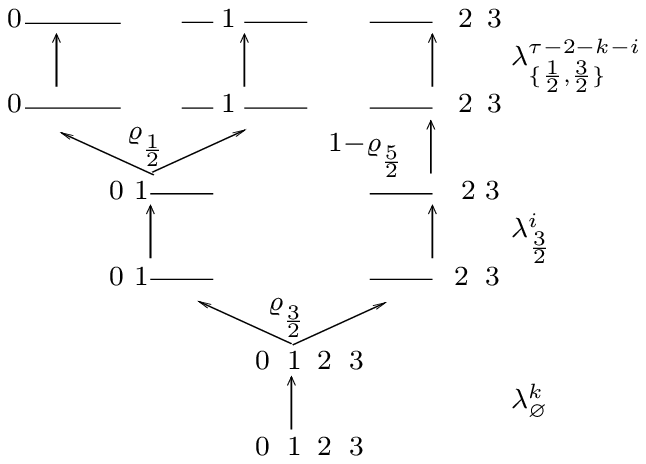}
\end{center}  
\caption{The two possible paths of the segmentation process of Example
\ref{ex:a13}
that lead to $F^{}_{\tau} = \{\tfrac{1}{2},\tfrac{3}{2}\}$.
The left panel refers to the first double sum of~\eqref{ex13eq}, the right one
to the second.}
\label{fig:a13t}
\end{figure}
\end{example}

$\mathbb{P}(F^{}_{\tau} =G)$
may be understood as a sum over both tree topologies and branch lengths,
i.e. we are concerned with all possible {\em ultrametric binary trees} 
that can be produced
by the segmentation process. (The trees will be explained in
more detail later. For the moment, recall that, in a binary tree, each
internal node has at most two offspring nodes. An ultrametric
tree is a tree whose branches are assigned lengths  such that all leaves have
the same distance from the root. For a review of metric trees,
see \citealt[Chap.~7]{SempleSteel}). 
In our case, the branch length corresponds to the number of time steps between
consecutive nodes, and  each internal node with two
offspring nodes corresponds to a recombination event.
We will now show  that (and how) it is sufficient to deal
with the corresponding {\em tree topologies} instead, which are
obtained by contracting consecutive edges connected by a node with
a single offspring into a \emph{single} edge and removing the branch
lengths. The result of this (many-to-one) operation 
is the topology of a \emph{full binary tree}, that is,
every internal node has exactly two offspring nodes (which may be
internal nodes or leaves). The probability for each topology then is
the sum of all probabilities of all the underlying original (ultrametric)
trees, that is, the probability for all possible combinations of
branch lengths (cf.\ the double sums in \eqref{ex13eq}.)
It will turn out that these sums 
may be evaluated explicitly for each topology, which is the reason
that this approach is useful. For Example~\ref{ex:a13},
this will (after a simple but lengthy calculation) result in
\begin{equation}\label{treeassum}
\begin{split}
\mathbb{P}(F^{}_{\tau} = \{\tfrac{1}{2},\tfrac{3}{2}\})  & =  \mathbb{P}(\text{tree $1$}) + \mathbb{P}(\text{tree $2$}) \\
   &= \Bigl((\lambda^{\tau}_{\{\frac{1}{2},\frac{3}{2}\}} - \lambda^{\tau}_{\varnothing})
   \tfrac{\varrho^{}_{\frac{1}{2}}}{\lambda^{}_{\{\frac{1}{2},\frac{3}{2}\}} - \lambda^{}_{\varnothing}}
   - (\lambda^{\tau}_{\frac{1}{2}} - \lambda^{\tau}_{\varnothing})\Bigr)\\
   & + \Bigl((\lambda^{\tau}_{\{\frac{1}{2},\frac{3}{2}\}} - \lambda^{\tau}_{\varnothing})
    \tfrac{\varrho^{}_{\frac{3}{2}}}{\lambda^{}_{\{\frac{1}{2},\frac{3}{2}\}} - \lambda^{}_{\varnothing}}
 - (\lambda^{\tau}_{\frac{3}{2}} - \lambda^{\tau}_{\varnothing})
    \tfrac{\varrho^{}_{\frac{3}{2}}}{\lambda^{}_{\frac{3}{2}} - \lambda^{}_{\varnothing}}\Bigr) \, ,
\end{split}
\end{equation}
where tree $1$ refers to the left  
and tree $2$ to the right panel in Figure~\ref{fig:a13t}.
\subsection{Ancestral tree topologies}
Our aim now is to assign probabilities to each of the possible
topologies   that have the elements of a 
given set $G$ as their internal nodes. Once the probabilities are known, 
$\mathbb{P}(F^{}_{\tau}=G)$ is obtained by summing over all compatible topologies.
Let us begin with a suitable definition for our tree topologies
(see Figure~\ref{fig:treetopology} for an illustration).

\begin{definition}
For $\varnothing\neq G\subseteq L$, a  \emph{tree topology}
is defined as $T:=(G,m)$, 
where $G$ signifies the set of internal nodes,  
$\gamma\in G$  designates the \emph{initial branching point}
of the tree, and,
in addition, $r$ is the \emph{root}.
The function $m$ is given by
\[
\begin{split}
  m: \, G &\longrightarrow G\cup \{r\} \\
  \alpha &\mapsto m(\alpha) \, ,
\end{split}
\]
and $m(\alpha)$ denotes the (unique) ancestor of the 
internal node $\alpha\in G$.
$m(\alpha)$ is an internal node except for $\alpha=\gamma$, where
$m(\gamma)=r$. 
We will assume throughout that $m$ is
\emph{tree-consistent}, that is,  the resulting structure
is a full binary tree topology. For $G=\varnothing$, 
the only tree topology is the empty tree (with no internal nodes).
\end{definition}
Thus, $T$ has the internal nodes $\alpha\in G$ and the set of internal edges
\[
\{(m(\alpha),\alpha) \mid \alpha\in G, m(\alpha) \neq r\},
\]
 as well
the external edge
 $(r,\gamma)$.
Note that  we have not included the (external) leaves 
(and corresponding external edges) in our definition
since they will never be required explicitly. Note also that, in the
context of phylogeny, the (canonical) root of the tree is what we call initial
branching point, cf.\ \citet{SempleSteel}. For an excellent account
of terminology and properties of trees, see \citet[Chap.~3]{GrossYellen}.

We will use the standard partial order on $T$, namely, 
$ \alpha \preccurlyeq \beta$ means that $\alpha$ is on the path from  
     $\gamma$ to $\beta$, i.e. $\alpha=m^i(\beta)$ 
        for some $i\in \{0,\ldots,\vert G\vert\}$.
Obviously,  $\gamma$ is the minimal element of $G$ with respect
to $\preccurlyeq$. Furthermore, $\alpha \prec \beta$ means that
$\alpha \preccurlyeq \beta$ with $\alpha \neq \beta$.

Note that the topology $T$ as such does not depend on $L$ except via
the requirement $G \subseteq L$; it may likewise represent
a realisation of 
a process $\{F^{(\tilde L)}_{s}\}_{0 \leqslant s
\leqslant \tau}$, restricted to
a (contiguous) subset $\tilde L$ of $L$,
provided $G \subseteq \tilde L$.  If we also specify the set of links,
say $\tilde L$, then each edge of a given tree topology can be
associated with a particular segment. Namely, for $T=(G,m)$ and
$\alpha\in G$, we associate with the  edge
$(m(\alpha),\alpha)$ the segment
\[
 I_{\alpha}^{(\tilde L)}(T) := K\in \mathcal{L}^{(\tilde L)}_{\{\beta: \beta  \prec \alpha\}} \, \text{ s.t. } \alpha\in K \, .
\]
In words, $I_{\alpha}^{(\tilde L)}(T)$ is the segment that
will receive its next cut at link $\alpha$ (given the topology $T$).
In particular, $I^{(\tilde L)}_{\gamma}(T)= \tilde L$ (independently of $T$).
An example  is given in Figure~\ref{fig:treetopology}.
From now on we will suppress the dependence  on $T$ and $\tilde L$
throughout 
and write $I^{}_{\alpha}$ instead of $I^{(\tilde L)}_{\alpha}(T)$.
\begin{figure}[h]
\begin{center}
\includegraphics[scale=1]{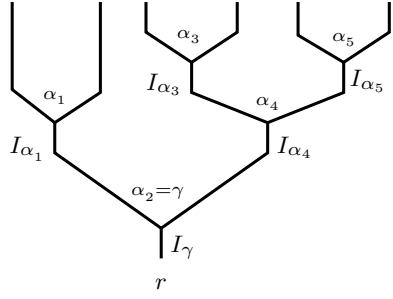}
\end{center}
\caption{Example of a tree topology $T$ for $G=\{\alpha^{}_1,\ldots,\alpha^{}_5\}$, with root $r$ and initial branching point $\gamma=\alpha^{}_2$.
Each (internal and root) edge is identified with a certain segment. Here, we have
$I_{\alpha^{}_1} = L_{<\gamma}$, $I_{\alpha^{}_2} = I_{\gamma} = L$, 
$I_{\alpha^{}_3} = \{\gamma  + 1,\ldots,\alpha^{}_3,\ldots,\alpha^{}_4 -1\}$,
$I_{\alpha_4} = L_{>\gamma}$ and $I_{\alpha_5} = L^{}_{>\alpha^{}_4}$. }
\label{fig:treetopology}
\end{figure}
Next, we  define {\em subtrees}.

\begin{definition}[Subtrees and subtree decomposition]\label{defin:subtrees}
Consider $T=(G,m)$ with $\varnothing\neq G\subseteq L$. Then,
for any $\gamma \in H\subseteq G$ and $\alpha\in G$,
a subtree of $T$ is  defined via 
$T^{}_{\alpha}(H)=(G_{\alpha}(H), m|^{}_{G_{\alpha}(H)} )$,
where 
\[
  G^{}_{\alpha}(H) := \{\beta\in G\arrowvert\alpha\preccurlyeq\beta \, \text{ and }
                h\nprec\beta \quad \forall \, h\in H \, \text{ with } \alpha\prec h\} \, ,
\]
and $m|^{}_{G_{\alpha}(H)}$ is the restriction of $m$ to $G_{\alpha}^{}(H)$.
Specifically, we set $m|^{}_{G_{\alpha}(H)}(\alpha) =:r_{\alpha}$ 
for the initial branching point of the 
respective subtree  (so that
 $r=r_{\gamma}$ for consistency).  
The collection $\{T^{}_{\beta}(H)\}_{\beta \in H}$
describes a {\em decomposition} of $T$ into subtrees, where
$T^{}_{\beta}(H)$ 
has initial branching point $\beta$ and internal nodes $G^{}_{\beta}(H)$.
\end{definition}

Intuitively, the decomposition is obtained by `cutting the tree
below each element of $H$'. The tree then disintegrates into the
subtrees $\{T^{}_{\beta}(H)\}_{\beta \in H}$, 
and each element of $H$ appears as the initial branching point of
one of the subtrees;  Figure~\ref{fig:subdecompo} provides an example.
The $T^{}_{\alpha}(H), \alpha \in G \setminus H$, are, in turn, subtrees of these
subtrees; they will also be required in what follows.

Obviously, $G^{}_{\alpha}(H)$ depends on the topology $T$ (via the partial order),
but again  we omit this for economy of notation. 
Note that the subtrees inherit the segments from the original tree,
i.e.,  $I^{}_{\beta}(T^{}_{\alpha}(H)) = I^{}_{\beta}(T) = I^{}_{\beta}$.
Let us mention that  similar subtree decompositions  appear in the
context of molecular phylogeny, for example, \emph{Tuffley's poset}
\citep{GillSteel}.

\begin{figure}[h]
\begin{center}
\includegraphics[scale=1]{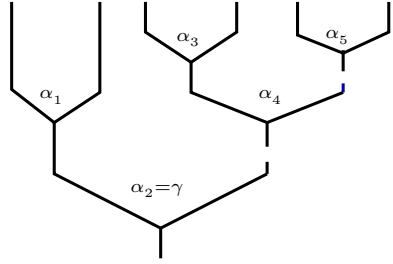}
\end{center}
\caption{Decomposition of a tree topology into three subtrees via
 $H = \{\gamma,\alpha^{}_4,\alpha^{}_5\}$.The subtrees are labelled
with their initial branching points, so the decomposition consists of
$T_{\gamma}(H)$, $T_{\alpha_4}(H)$, and $T_{\alpha_5}(H)$, with node sets
 $G_{\gamma}(H) = \{\alpha^{}_1,\gamma\}$, $G_{\alpha_4}(H) = \{\alpha^{}_3,\alpha^{}_4\}$,
and $G_{\alpha_5}(H) = \{\alpha^{}_5\}$, respectively.}
\label{fig:subdecompo}
\end{figure}

\subsection{ART probabilities and explicit solution of the segmentation process}
Let us now assign probabilities to tree topologies.
To this end, consider the {\em augmented segmentation process}
$\{\widetilde{F}^{}_{\tau}\}_{\tau \in \NN_0}$
with values in the set of all possible tree topologies $T=(G,m)$
(rather than the sets $G$ alone); $\widetilde{F}^{}_{\tau} =(G,m)$ means
that $F^{}_{\tau} =G$, and the segmentation events have occured
according to the partial order implied by $m$ 
(as in \eqref{treeassum} of Example \ref{ex:a13}).
We will abbreviate $\mathbb{P}(\widetilde{F}^{}_{\tau}=T)$
as $\mathbb{P}_{\tau}(T)$.
Let us now state the central result for these tree probabilities.
\begin{theorem}[ART probabilities]\label{thm:treeprob}
Under the segmentation process,
the probability for the tree topology $T=(G,m)$ at time $\tau$ is given by
 $\mathbb{P}^{}_{\tau}(T) = (\lambda^{(L)}_{\varnothing})^{\tau}$ for $G=\varnothing$,
and, for $\varnothing \neq G \subseteq L$ and initial branching point $\gamma \in G$, by
\begin{equation}\label{probtree}
  \mathbb{P}_{\tau}(T) =   \sum_{\gamma \in H \subseteq G} (-1)^{|H|-1} \,
              \big [(\lambda^{(L)}_{G_{\gamma}(H)})^{\tau} - (\lambda^{(L)}_{\varnothing})^{\tau} \big ] \,
               f(T,H).
\end{equation}
Here, for $H\subseteq G$, 
\begin{equation}\label{fth}
   f(T,H) := \prod_{\alpha \in G} g(T^{}_{\alpha}(H))
\end{equation}
with
\begin{equation}\label{gfunc}
 g(T^{}_{\alpha}(H)) := 
  \frac{\varrho^{}_{\alpha}}{\lambda_{G_{\alpha}(H)}^{(I_{\alpha})} 
  - \lambda_{\varnothing}^{(I_{\alpha})}} \quad\text{ for $\alpha\in G$, $H\subseteq G$} \, .
\end{equation}
\end{theorem}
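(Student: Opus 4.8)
The plan is to prove \eqref{probtree} by induction on $|G|$, using the Markov property of the augmented segmentation process together with the conditional independence of segments built into Definition~\ref{defin:segmentation} (and already exploited in the second proof of Theorem~\ref{thm:segisa}). For $G=\varnothing$ the value $(\lambda^{(L)}_{\varnothing})^{\tau}$ is the stated one, and for $G\neq\varnothing$ the initial condition at $\tau=0$ is automatically matched since every bracket $(\lambda^{(L)}_{G_{\gamma}(H)})^{0}-(\lambda^{(L)}_{\varnothing})^{0}$ vanishes. The engine of the induction is a one-step backward decomposition: since $\gamma$ is the minimal element of $G$ under $\preccurlyeq$, it must be the first link cut, so in the first generation either nothing happens (probability $\lambda^{(L)}_{\varnothing}$) or $\gamma$ is cut (probability $\varrho_{\gamma}$); in the latter case the two segments $L_{<\gamma}$ and $L_{>\gamma}$ thereafter evolve as \emph{independent} copies of the marginal segmentation processes $\{F^{(L_{<\gamma})}_{s}\}$ and $\{F^{(L_{>\gamma})}_{s}\}$ by the conditional independence of segments and Proposition~\ref{prop:marg}, and they produce the left and right subtrees $T^{<}$ and $T^{>}$ of $\gamma$ (with the convention that an absent child contributes the empty tree, of probability $(\lambda^{(\cdot)}_{\varnothing})^{s}$). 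This yields
\[
 \mathbb{P}_{\tau+1}(T)=\lambda^{(L)}_{\varnothing}\,\mathbb{P}_{\tau}(T)+\varrho_{\gamma}\,\mathbb{P}^{(L_{<\gamma})}_{\tau}(T^{<})\,\mathbb{P}^{(L_{>\gamma})}_{\tau}(T^{>}),
\]
equivalently, upon iteration, a telescoped version with a geometric sum over the number of quiet initial steps, as in Example~\ref{ex:a13}; the one-step form is more economical. Since $|G^{<}|,|G^{>}|<|G|$, the induction hypothesis applies to $T^{<}$ and $T^{>}$, and it remains to verify that the right-hand side of \eqref{probtree}, call it $P_{\tau}(T)$, satisfies the very same recursion.

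Forming $P_{\tau+1}(T)-\lambda^{(L)}_{\varnothing}P_{\tau}(T)$, each summand's bracket collapses to $(\lambda^{(L)}_{G_{\gamma}(H)})^{\tau}\bigl(\lambda^{(L)}_{G_{\gamma}(H)}-\lambda^{(L)}_{\varnothing}\bigr)$; since $I_{\gamma}=L$, \eqref{gfunc} gives $\bigl(\lambda^{(L)}_{G_{\gamma}(H)}-\lambda^{(L)}_{\varnothing}\bigr)\,g(T_{\gamma}(H))=\varrho_{\gamma}$, so that
\[
 P_{\tau+1}(T)-\lambda^{(L)}_{\varnothing}P_{\tau}(T)=\varrho_{\gamma}\sum_{\gamma\in H\subseteq G}(-1)^{|H|-1}(\lambda^{(L)}_{G_{\gamma}(H)})^{\tau}\prod_{\alpha\in G\setminus\{\gamma\}}g(T_{\alpha}(H)).
\]
I would then split $H=\{\gamma\}\sqcup H^{<}\sqcup H^{>}$ with $H^{<}=H\cap G^{<}$, $H^{>}=H\cap G^{>}$ (where $G^{<},G^{>}$ are the node sets of $T^{<},T^{>}$). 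Three bookkeeping facts make the sum factor: (i) $G_{\gamma}(H)=\{\gamma\}\cup A^{<}\cup A^{>}$, with $A^{\lessgtr}$ the nodes of $G^{\lessgtr}$ lying below no element of $H^{\lessgtr}$, so $\lambda^{(L)}_{G_{\gamma}(H)}=\lambda^{(L_{<\gamma})}_{A^{<}}\,\lambda^{(L_{>\gamma})}_{A^{>}}$, since cutting $\gamma$ splits $L$ into the two link ranges and the segments of each side are unaffected by cuts on the other; (ii) for $\alpha\in G^{<}$ one has $I^{(L)}_{\alpha}(T)=I^{(L_{<\gamma})}_{\alpha}(T^{<})$ and $G_{\alpha}(H)=G^{<}_{\alpha}(H^{<})$, so $g(T_{\alpha}(H))$ depends only on $H^{<}$ and equals its value computed inside $T^{<}$ (symmetrically for $G^{>}$); (iii) $(-1)^{|H|-1}=(-1)^{|H^{<}|}(-1)^{|H^{>}|}$. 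Hence the sum equals $\varrho_{\gamma}\bigl(\sum_{H^{<}\subseteq G^{<}}\cdots\bigr)\bigl(\sum_{H^{>}\subseteq G^{>}}\cdots\bigr)$, each factor of the shape $\sum_{K\subseteq G_{S}}(-1)^{|K|}(\lambda^{(\tilde L)}_{A(K)})^{\tau}\prod_{\alpha\in G_{S}}g(S_{\alpha}(K))$ for a subtree $S=(G_{S},m_{S})$ on a link set $\tilde L$, with $A(K)$ the nodes of $G_{S}$ below no element of $K$.

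The main — and trickiest — remaining step is then the auxiliary identity
\[
 \sum_{K\subseteq G_{S}}(-1)^{|K|}(\lambda^{(\tilde L)}_{A(K)})^{\tau}\prod_{\alpha\in G_{S}}g(S_{\alpha}(K))=\mathbb{P}^{(\tilde L)}_{\tau}(S),
\]
the right-hand side being \eqref{probtree} for $S$. I would prove it by pairing each $K$ not containing the root $\gamma_{S}$ with $K\cup\{\gamma_{S}\}$: adding the root changes none of the subtrees $S_{\alpha}(K)$ (the root is an ancestor of every node, hence never enters the "strictly below $\alpha$" condition defining $S_{\alpha}$), so it leaves $\prod_{\alpha}g(S_{\alpha}(K))$ untouched, while it forces $A(K\cup\{\gamma_{S}\})=\varnothing$ and identifies $A(K)$ with the root piece $(G_{S})_{\gamma_{S}}(K\cup\{\gamma_{S}\})$; summing the paired terms rewrites $\sum_{K\subseteq G_{S}}$ as $\sum_{\gamma_{S}\in K'\subseteq G_{S}}(-1)^{|K'|-1}\bigl[(\lambda^{(\tilde L)}_{(G_{S})_{\gamma_{S}}(K')})^{\tau}-(\lambda^{(\tilde L)}_{\varnothing})^{\tau}\bigr]f(S,K')$, which is exactly \eqref{probtree}. (The degenerate cases $G_{S}=\varnothing$ and an empty link range on one side of $\gamma$ are immediate from the conventions above.) Combining everything, $P_{\tau}(T)$ obeys the same one-step recursion and the same $\tau=0$ value as $\mathbb{P}_{\tau}(T)$, which closes the induction. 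The part I expect to cost the most effort is this last identity — or rather the accompanying bookkeeping of exactly how the segments $I_{\alpha}$, the node sets $G_{\alpha}(H)$, and the $\lambda$-factors restrict to subtrees and react to enlarging $H$ by a root — which is precisely where the conditional independence of the segmentation process does the work.
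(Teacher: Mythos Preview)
Your argument is correct, and it shares with the paper the same inductive skeleton: induct on $|G|$, decompose at the initial branching point $\gamma$ into the left and right subtrees $T',T''$ on $L_{<\gamma},L_{>\gamma}$, and feed the induction hypothesis for the subtrees back in. The organisation of the inductive step is genuinely different, however. The paper works with the iterated form $\mathbb{P}_{\tau}(T)=\sum_{i=0}^{\tau-1}\lambda_{\varnothing}^{\,i}\varrho_{\gamma}\,\mathbb{P}_{\tau-1-i}(T')\,\mathbb{P}_{\tau-1-i}(T'')$, substitutes~\eqref{probtree} for the subtrees, multiplies out (separating the cases where $\gamma$ is extreme or interior, so that one subtree may be empty), and then collapses the sum over $i$ via the geometric identity~\eqref{lemmconseq}; the remaining reshuffling goes through the assembly rule~\eqref{mengengym} and a change of summation variable. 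You instead verify that the right-hand side of~\eqref{probtree} satisfies the equivalent one-step recursion: subtracting $\lambda_{\varnothing}P_{\tau}(T)$ kills the $g(T_{\gamma}(H))$-denominator, the sum then factorises over $H^{<}\times H^{>}$ via your (i)--(iii), and the pairing $K\leftrightarrow K\cup\{\gamma_{S}\}$ is a clean inclusion--exclusion that turns each factor back into~\eqref{probtree} for the corresponding subtree. Your route avoids both the explicit geometric summation and the case split on the position of $\gamma$, at the price of the auxiliary identity; the paper's route is more computational but makes the origin of the $\lambda_{G}$ as geometric decay rates completely transparent. One small caveat: your statement ``$A(K\cup\{\gamma_{S}\})=\varnothing$'' and the factorisation (i) both require reading ``below $h$'' as $h\preccurlyeq\beta$ (weak), which is the intended meaning of $G_{\alpha}(H)$ in Definition~\ref{defin:subtrees} (as Figure~\ref{fig:subdecompo} confirms); with the strict reading the pairing would fail, so it is worth making this explicit.
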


\begin{remark}
\label{rem:alltildeL}
A few remarks are in order:
\begin{enumerate}
\item Note that the dependence on $\tau$ is solely due to the 
term in square brackets in \eqref{probtree}, whereas $f$ is
independent of time.
\item The same term  implies that 
$\PP_0((G,m))=0$ for all $G \neq \varnothing$ and all tree-consistent
mappings $m$. 
\item The $g(T_{\alpha}(H))$ are well-defined and strictly positive since 
$G_{\alpha}(H) \neq \varnothing$
implies that $\lambda_{G_{\alpha}(H)} > \lambda_{\varnothing}$ (cf.\ Lemma~5 in
\citealt{discretereco}). 
\item Eq.~\eqref{probtree} implies a sum over all
possible subtree decompositions of $T$;
for every given decomposition, the subtree containing $\gamma$ 
plays a special role.
\item  We have implied here that the underlying set of links is $L$, i.e.,
$\mathbb{P}_{\tau}(T) = \mathbb{P}(\widetilde{F}^{(L)}_{\tau}=T)$.
However, due to Prop.~\ref{prop:marg}, the result carries over if $L$
is replaced by any contiguous $\tilde L \subseteq L$ that contains $G$. 
\end{enumerate}
\end{remark}

Before we embark on the proof, let us briefly comment on the general strategy.
Like every Markov chain, the segmentation process 
can be viewed in  forward or  in  backward direction
(with respect to the time increment on the $\tau$ time scale):  
If the increment is at the end of the time interval, then
the corresponding ultrametric tree grows at its top
(i.e. the external branches are extended
or split up); otherwise it grows at its
base (i.e. the root branch 
is extended or the two corresponding subtrees coalesce).
Where the original formulation (Definition~\ref{defin:segmentation})
is in the bottom-up direction,  the advantage of the top-down approach
is that one only has to deal with two objects in every step, 
namely the left and the right subtrees
that emerge via the first segmentation event on the  $\tau$
timescale
(and that are joined when looking back), 
instead of a possibly large number of smaller segments at the top.
This point of view has already been used in the proof of 
Theorem~\ref{thm:segisa} and will again serve in the following proof.
\begin{proof}[of Theorem $\ref{thm:treeprob}$.]
We will prove the claim via induction in the top-down direction by progressively
merging pairs of subtrees.
To do so, we first need some properties 
related to the corresponding tree decomposition, see Figure~\ref{fig:treezusamm}.
Consider a tree topology $T=(G,m)$, with 
$\varnothing\neq G\subseteq  L$.
The initial branching point of $T$ is  $\gamma\in G$ as before.
If $\gamma$ has two (internal) offspring nodes, these are denoted by $\gamma^{'}\in G_{<\gamma}\subseteq  L_{<\gamma}$
and $\gamma^{''}\in G_{>\gamma}\subseteq  L_{>\gamma}$. We then define the left
subtree $T^{'}$ of $T$ as 
$T^{'} =  (G_{<\gamma},m|^{}_{G_{<\gamma}})$
and analogously the right subtree $T^{''}$ as 
$T^{''} =  (G_{>\gamma},m|^{}_{G_{>\gamma}}) $,
 both obviously with fewer nodes than $T$.
For convenience, we will denote $ L_{<\gamma}$ ($ L_{>\gamma}$) by $L^{'}$ ($L^{''}$) and
the respective nodes by $G^{'} = G_{<\gamma}$ ($G^{''}= G_{>\gamma}$).
If $\gamma$ has no or only one offspring node, we take the
empty tree (where nothing happens) as left and/or right subtree.
In any case, the offspring nodes of  $\gamma$ are specified through
the preimage of $\gamma$ under the function $m$, i.e.
$m^{-1}(\gamma)\in\{ \{\varnothing\}, \{\gamma^{'}\},\{\gamma^{''}\}, \{\gamma^{'},\gamma^{''}\}\}$.
$T$ is then  obtained by joining these subtrees together at $\gamma$.
In terms of the segmentation process, this corresponds
to the very first cut (of $L$, at link $\gamma$).
Since this may happen at any time $j\in\{1,\ldots,\tau\}$,
(i.e.  the root branch lasts  for $i=j-1\in\{0,\ldots, \tau-1\}$ times while
 $T^{'}$ and $T^{''}$ apply for the remaining $\tau\!-\!1\!-\!i$ time steps),
it is clear that
\begin{equation}\label{proofansatz}
 \mathbb{P}^{}_{\tau}(T) = \sum_{i=0}^{\tau-1}\lambda_{\varnothing}^i\varrho^{}_{\gamma}
   \mathbb{P}^{}_{\tau-1-i}(T^{'})\mathbb{P}^{}_{\tau-1-i}(T^{''}). 
\end{equation}
\begin{figure}
\begin{center}
 \includegraphics{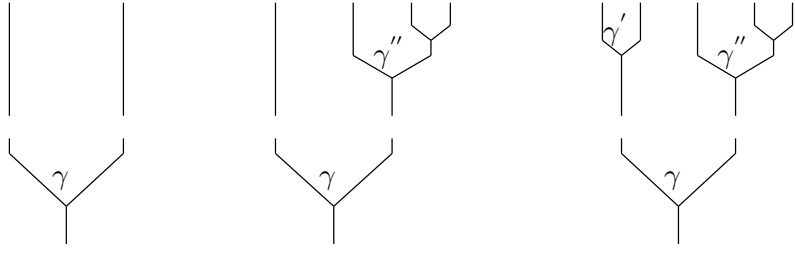}
\end{center}
\caption{Joining together the left and the right subtrees 
($T'$ and $T''$) at 
the initial branching point $\gamma$. Three different cases arise, depending on whether one or both
of the subtrees are  empty trees.}
\label{fig:treezusamm}
\end{figure}
Before we can evaluate \eqref{proofansatz}, we need three preparatory results.\\
\subsubsection*{{\rm (A)}  Product rule for the $\lambda$'s}
For all $L= L^{'}\cup L^{''}\cup \{\gamma\}$ and all $G^{'}\subseteq L^{'}$, $G^{''}\subseteq L^{''}$,
we have 
\begin{equation}\label{prodlam}
 \lambda^{(L^{'})}_{G^{'}} \cdot\lambda^{(L^{''})}_{G^{''}} = \lambda^{( L)}_{G^{'}\cup G^{''}\cup\{\gamma\}} \, ,
\end{equation}
which follows immediately from \eqref{lamkoeff}.
\subsubsection*{{\rm (B)} Product rule for $f$}
For all $H\subseteq G$ 
with $H^{'}:= G^{'}\cap H$ and
$H^{''}:= G^{''}\cap H$, one has
\begin{equation}\label{GtoGprime}
 \begin{split}
   G^{}_{\alpha}(H) &= G^{}_{\alpha}(H^{'}) = G^{'}_{\alpha}(H^{'}) \quad\text{ for all $\alpha\in G^{'}$ and} \\
   G^{}_{\alpha}(H) &= G^{}_{\alpha}(H^{''}) = G^{''}_{\alpha}(H^{''}) \quad\text{ for all $\alpha\in G^{''}$} \, ,
 \end{split}
\end{equation}
so that consequently
\begin{equation}
 \begin{split}
  T^{}_{\alpha}(H) &= T^{'}_{\alpha}(H^{'}) \quad\text{ for all $\alpha\in G^{'}$ and} \\
   T^{}_{\alpha}(H) &= T^{''}_{\alpha}(H^{''}) \quad\text{ for all $\alpha\in G^{''}$} \, .
 \end{split}
\end{equation}
This then leads to the following product rule
for the function $f$ from \eqref{fth}:
\begin{equation}\label{prodfth}
\begin{split}
 f(T,H) &= g(T^{}_{\gamma}(H))\cdot \prod_{\alpha\in G^{'}}g(T^{}_{\alpha}(H))
   \prod_{\beta\in G^{''}}g(T^{}_{\beta}(H))  \\
   &= g(T^{}_{\gamma}(H))\cdot \prod_{\alpha\in G^{'}}g(T^{'}_{\alpha}(H^{'}))
   \prod_{\beta\in G^{''}}g(T^{''}_{\beta}(H^{''})) 
= g(T^{}_{\gamma}(H)) f(T^{'},H^{'})f(T^{''},H^{''})\, .
\end{split}
\end{equation}
Note that in case $G^{'}=\varnothing$ or $G^{''}=\varnothing$, the corresponding
empty product is  $1$ as usual.
\subsubsection*{{\rm (C)} Assembly of initial branching point and subtrees}
As an immediate consequence of  Definition~\ref{defin:subtrees}, one obtains
for all $C\subseteq m^{-1}(\gamma) \subseteq H\subseteq G$:
\begin{equation}\label{mengengym}
\{\gamma \}\cup  \bigcup_{\eta\in C} G^{}_{\eta}(H) = 
G^{}_{\gamma} (H\setminus C ) = G^{}_{\gamma} ((H\cup \{\gamma\})\setminus C )\, .
\end{equation}
(In fact, this relationship is not restricted to $\gamma$ but also holds 
for arbitrary $\alpha\in G$,
in an analogous way; but this will not be used in what follows.)

\subsubsection*{{\rm (D)} Subtree summation}
Recall  the classic identity
\begin{equation}\label{lemma:geom}
 \sum_{i=0}^n a^i b^{n-i}  
 = \begin{cases} \frac{b^{n+1} - a^{n+1}}{b-a} \,, & a \neq b, \\
                 (n+1) a^n \,, & a = b,
\end{cases}
\end{equation}
where the second case is also the l'H\^{o}pital limit of the first.
Together with \eqref{gfunc}, this implies
\begin{equation} \label{lemmconseq}
\begin{split}
 \varrho^{}_{\alpha}\sum_{i=0}^{\tau-1}
   (\lambda^{(I_{\alpha})}_{\varnothing})^i(\lambda^{(I_{\alpha})}_{G^{}_{\alpha}(H)})^{\tau-i-1}
   &= \frac{\varrho_{\alpha}}{\lambda^{(I_{\alpha})}_{G^{}_{\alpha}(H)} - \lambda^{(I_{\alpha})}_{\varnothing}}
\cdot \big ((\lambda^{(I_{\alpha})}_{G^{}_{\alpha}(H)})^{\tau} - (\lambda^{(I_{\alpha})}_{\varnothing})^{\tau} \big ) \\
  &= g\big (T_{\alpha}(H) \big ) \big ((\lambda^{(I_{\alpha})}_{G^{}_{\alpha}(H)})^{\tau} - (\lambda^{(I_{\alpha})}_{\varnothing})^{\tau} \big )
\end{split}
\end{equation}
for all $T=(G,m)$, $\varnothing\neq G\subseteq  L$, $\alpha\in G$ and $H\subseteq  L$.

\bigskip

We now continue the proof of the theorem and proceed
via induction over $\vert G\vert$.
For $G=\varnothing$, the claim holds trivially, that is, 
$\PP_{\tau}(T) = \lambda_{\varnothing}^{\tau}$  for all
$\tau \geqslant 0$. 
For $G\subseteq L$ with $\vert G\vert =1$, i.e. $G=\{\gamma\}$,
both the left and the right subtrees are empty trees
(see the left case in Figure~\ref{fig:treezusamm}).
Using first \eqref{proofansatz}, then the result for $G^{'} = G^{''} =\varnothing$ on $L'$ and $L''$, respectively (see Remark~\ref{rem:alltildeL}~(5)),
then \eqref{prodlam}, and finally \eqref{lemmconseq}, we obtain
\[
\begin{split}
 \mathbb{P}^{}_{\tau}(T) &= 
      \varrho_{\gamma} \sum_{i=0}^{\tau-1}\lambda^i_{\varnothing} 
  \mathbb{P}^{}_{\tau-1-i}(T') \mathbb{P}^{}_{\tau-1-i}(T'') 
  = \varrho_{\gamma} \sum_{i=0}^{\tau-1}\lambda^i_{\varnothing} 
           (\lambda_{\varnothing}^{(L^{'})})^{\tau-1-i}(\lambda_{\varnothing}^{(L^{''})})^{\tau-1-i} \\
 &= \varrho_{\gamma} \sum_{i=0}^{\tau-1}\lambda^i_{\varnothing}
           \lambda_{\gamma}^{\tau-1-i}
   = g\big (T^{}_{\gamma}(\{\gamma\})\big )
          (\lambda_{\gamma}^{\tau} - \lambda_{\varnothing}^{\tau}) \,.
\end{split}
\]
We now assume the claim to hold for all tree topologies 
$T=(G,m)$ for all  $G\subseteq L$
with $\vert G\vert\leqslant k$ for some $k\geqslant 1$; by Remark \ref{rem:alltildeL},
it then holds likewise with $L$ replaced by a contiguous 
$\tilde L \subseteq L$ as
long as $G \subseteq \tilde L$. 
We next turn to $G=\{\alpha^{}_1,\ldots,\alpha^{}_{k+1}\}\subseteq 
L$
and fixed $T=(G,m)$; recall our convention $\alpha_1 < \alpha_2 < \ldots <
\alpha_{k+1}$.
In the following, we will always write $A\cup \alpha := A\cup\{\alpha\}$
and $A\setminus\alpha:=A\setminus\{\alpha\}$ for $A\subseteq  L$ and $\alpha\in L$.
We have to distinguish the two remaining cases in Figure~\ref{fig:treezusamm}
(middle and right):
\subsubsection*{Case $\gamma=\alpha^{}_1$}
Then $T^{'}$
is an empty tree while $T^{''}$ has initial branching point $\gamma^{''}\in G^{''}$, i.e. $m^{-1}(\gamma) =\{\gamma^{''}\}$.
We then find
\[
 \begin{split}
  \mathbb{P}^{}_{\tau}(T) &= \varrho^{}_{\gamma} \sum_{i=0}^{\tau-1}\lambda_{\varnothing}^i
   \mathbb{P}^{}_{\tau-1-i}(T^{'})\mathbb{P}^{}_{\tau-1-i}(T^{''}) \\
&=\varrho^{}_{\gamma}\sum_{i=0}^{\tau-1}\lambda_{\varnothing}^i
      (\lambda^{(L^{'})}_{\varnothing})^{\tau-1-i} \!\!\!\!\!\!\!
  \sum_{\gamma^{''}\in H^{''}\subseteq G^{''}} \!\!\!(-1)^{\vert H^{''}\vert -1}
\Bigl((\lambda^{(L^{''})}_{G^{''}_{\gamma^{''}}(H^{''})})^{\tau-1-i} - 
         (\lambda^{(L^{''})}_{\varnothing})^{\tau-1-i}\Bigr) f(T^{''},H^{''}) \\
&=\varrho^{}_{\gamma}\sum_{i=0}^{\tau-1}\lambda_{\varnothing}^i
  \sum_{\gamma^{''}\in H^{''}\subseteq G^{''}} \!\!\!(-1)^{\vert H^{''}\vert -1}
\big(\lambda_{\gamma \cup G^{''}_{\gamma^{''}}(H^{''})}^{\tau-1-i} - 
         \lambda_{\gamma}^{\tau-1-i}\big) f(T^{''},H^{''}) \\
&= \varrho^{}_{\gamma} \sum_{i=0}^{\tau-1}\lambda_{\varnothing}^i
   \sum_{\gamma^{''}\in H^{''}\subseteq G^{''}} \!\!\!(-1)^{\vert H^{''}\vert -1}
\big(\lambda^{\tau-1-i}_{G^{}_{\gamma^{}}((H^{''}\cup \gamma)\setminus \gamma^{''})} - 
         \lambda^{\tau-1-i}_{G^{}_{\gamma^{}}(H^{''}\cup \gamma)}\big) f(T^{''},H^{''}) \\
 &= \sum_{\gamma^{''}\in H^{''}\subseteq G^{''}} (-1)^{\vert H^{''}\vert -1}
      g(T_{\gamma}((H^{''}\cup\gamma)\setminus \gamma^{''}))
    \bigl(\lambda_{G^{}_{\gamma^{}}((H^{''}\cup \gamma)\setminus \gamma^{''})}^{\tau} - 
         \lambda_{\varnothing}^{\tau}\bigr) f(T^{''},H^{''}) \\
  &\phantom{=} - \sum_{\gamma^{''}\in H^{''}\subseteq G^{''}} 
                 (-1)^{\vert H^{''}\vert -1} g(T_{\gamma}(H^{''}\cup\gamma))
\bigl(\lambda_{G^{}_{\gamma^{}}(H^{''}\cup \gamma)}^{\tau} - \lambda_{\varnothing}^{\tau}\bigr)
     f(T^{''},H^{''}) \\
  &= \sum_{\gamma^{''}\in H^{''}\subseteq G^{''}} (-1)^{\vert H^{''}\vert -1}
        \bigl(\lambda^{\tau}_{G^{}_{\gamma^{}}((H^{''}\cup \gamma)\setminus \gamma^{''})} - 
         \lambda_{\varnothing}^{\tau}\bigr) f(T,(H^{''}\cup \gamma)\setminus \gamma^{''}) \\   
   &\phantom{=} - \sum_{\gamma^{''}\in H^{''}\subseteq G^{''}} (-1)^{\vert H^{''}\vert -1}
   \bigl (\lambda_{G^{}_{\gamma^{}}(H^{''}\cup \gamma)}^{\tau} - 
\lambda_{\varnothing}^{\tau}\bigr) f(T,H^{''}\cup\gamma) \\
  &= \sum_{\gamma\in H\subseteq G\setminus \gamma^{''}} (-1)^{\vert H\vert -1} 
    \bigl(\lambda_{G^{}_{\gamma^{}}(H)}^{\tau} - 
         \lambda_{\varnothing}^{\tau}\bigr)f(T,H) - \!\!\!\!\!
    \sum_{\{\gamma,\gamma^{''}\}\subseteq H\subseteq G} (-1)^{\vert H\vert} \bigl(\lambda_{G^{}_{\gamma^{}}(H)}^{\tau} - 
         \lambda_{\varnothing}^{\tau}\bigr)f(T,H) \\
  &= \sum_{\gamma\in H\subseteq G} (-1)^{\vert H\vert - 1}
     \bigl(\lambda_{G^{}_{\gamma^{}}(H)}^{\tau} - 
         \lambda_{\varnothing}^{\tau}\bigr)f(T,H) \, .
 \end{split}
\]
In the first step, we have  used \eqref{proofansatz}, in the
second  the induction hypothesis (applied to $T''$ on $L''$),
in the third the product structure of the $\lambda$'s \eqref{prodlam},
in the fourth \eqref{GtoGprime} (read in the backward direction) and 
\eqref{mengengym} (applied to $H=H''$ with $C=\{\gamma''\}$ and
$C=\varnothing$, respectively).  
In the fifth step, we have invoked \eqref{lemmconseq} (separately 
on each term in parentheses), in the sixth step we have
used  \eqref{prodfth} 
with $H=(H^{''}\cup\gamma)\setminus \gamma^{''}$ and
$H=H^{''}\cup\gamma$, respectively, and 
finally we have changed the summation variable
(where we set $G=\gamma\cup G^{''}$).

\subsubsection*{Case $\gamma\in\{\alpha^{}_2,\ldots, \alpha^{}_{k}\}$}
Now we have to consider
the left subtree $T^{'}$ with initial branching point $\gamma^{'}$ and the right subtree $T^{''}$ with 
initial branching point $\gamma^{''}$, i.e. $m^{-1}(\gamma) = \{\gamma^{'},\gamma^{''}\}$. 
Proceeding in analogy with the previous case, we obtain
\[
 \begin{split}
  \mathbb{P}^{}_{\tau}(T) &= \sum_{i=0}^{\tau-1}\lambda_{\varnothing}^i\varrho^{}_{\gamma}
   \mathbb{P}^{}_{\tau-1-i}(T^{'})\mathbb{P}^{}_{\tau-1-i}(T^{''}) \\
&=\sum_{i=0}^{\tau-1}\lambda_{\varnothing}^i\varrho^{}_{\gamma}
  \sum_{\gamma^{'}\in H^{'}\subseteq G^{'}} \!\!\!(-1)^{\vert H^{'}\vert -1}
\Bigl((\lambda^{(L^{'})}_{G^{'}_{\gamma^{'}}(H^{'})})^{\tau-1-i} - 
         (\lambda^{(L^{'})}_{\varnothing})^{\tau-1-i}\Bigr) f(T^{'},H^{'}) \\
&\phantom{=}\times\sum_{\gamma^{''}\in H^{''}\subseteq G^{''}} \!\!\!(-1)^{\vert H^{''}\vert -1}
\Bigl((\lambda^{(L^{''})}_{G^{''}_{\gamma^{''}}(H^{''})})^{\tau-1-i} - 
         (\lambda^{(L^{''})}_{\varnothing})^{\tau-1-i}\Bigr) f(T^{''},H^{''}) \\
&= \sum_{i=0}^{\tau-1}\lambda_{\varnothing}^i\varrho^{}_{\gamma}
   \sum_{\gamma^{'}\in H^{'}\subseteq G^{'}}
   \sum_{\gamma^{''}\in H^{''}\subseteq G^{''}}  (-1)^{\vert H^{'}\vert + \vert H^{''}\vert}
  \Bigl( \lambda^{\tau-1-i}_{G^{}_{\gamma}((H^{'}\cup H^{''}\cup \gamma)\setminus \{\gamma^{'}, \gamma^{''}\})} \\
    &\phantom{=} - \lambda^{\tau-1-i}_{G^{}_{\gamma}((H^{'}\cup H^{''}\cup\gamma)\setminus \gamma^{''})}
      - \lambda^{\tau-1-i}_{G^{}_{\gamma}((H^{'}\cup H^{''}\cup\gamma)\setminus \gamma^{'})}
   + \lambda^{\tau-1-i}_{G^{}_{\gamma}(H^{'}\cup H^{''}\cup\gamma)}\Bigr)f(T^{'},H^{'})f(T^{''},H^{''}) \\
 &=  \sum_{\gamma^{'}\in H^{'}\subseteq G^{'}}
   \sum_{\gamma^{''}\in H^{''}\subseteq G^{''}}  (-1)^{\vert H^{'}\vert + \vert H^{''}\vert}
  \Bigl(g(T^{}_{\gamma}((H^{'}\cup H^{''}\cup\gamma)\setminus\{\gamma^{'},\gamma^{''}\}))
(\lambda_{G^{}_{\gamma}((H^{'}\cup H^{''}\cup \gamma)\setminus \{\gamma^{'}, \gamma^{''}\})}^{\tau}
   - \lambda^{\tau}_{\varnothing}) \\
  &\phantom{=} - g(T^{}_{\gamma}((H^{'}\cup H^{''}\cup\gamma)\setminus\{\gamma^{''}\}))
(\lambda_{G^{}_{\gamma}((H^{'}\cup H^{''}\cup \gamma)\setminus \{\gamma^{''}\})}^{\tau})
   - \lambda_{\varnothing}^{\tau}) \\
   &\phantom{=} - g(T^{}_{\gamma}((H^{'}\cup H^{''}\cup\gamma)\setminus\{\gamma^{'}\}))
(\lambda_{G^{}_{\gamma}((H^{'}\cup H^{''}\cup \gamma)\setminus \{\gamma^{'}\})}^{\tau}
   - \lambda_{\varnothing}^{\tau})  \\
  &\phantom{=} + g(T^{}_{\gamma}(H^{'}\cup H^{''}\cup\gamma))
(\lambda_{G^{}_{\gamma}(H^{'}\cup H^{''}\cup \gamma)}^{\tau}
   - \lambda^{\tau}_{\varnothing}) \Bigr) f(T^{'},H^{'})f(T^{''},H^{''}) \\
&= \!\!\!\!\!\!\sum_{\gamma\in H\subseteq G\setminus\{\gamma^{'},\gamma^{''}\}} \!\!\!\!\!
       (-1)^{\vert H\vert -1}(\lambda_{G^{}_{\gamma}(H)}^{\tau} - \lambda_{\varnothing}^{\tau})f(T,H)
  - \!\!\!\!\!\!\!\!\sum_{\{\gamma, \gamma^{'}\}\subseteq H\subseteq G\setminus\{\gamma^{''}\}}\!\!\!\!\!
       (-1)^{\vert H\vert}(\lambda_{G^{}_{\gamma}(H)}^{\tau} - 
\lambda_{\varnothing}^{\tau})f(T,H) \\
  &\phantom{=} - \!\!\!\!\!\!\!\!\sum_{\{\gamma, \gamma^{''}\}\subseteq H\subseteq G\setminus\{\gamma^{'}\}}\!\!\!\!\!
       (-1)^{\vert H\vert}(\lambda_{G^{}_{\gamma}(H)})^{\tau} - 
\lambda_{\varnothing}^{\tau})f(T,H)
   + \!\!\!\!\!\!\!\!\sum_{\{\gamma, \gamma^{'},\gamma^{''}\}\subseteq H\subseteq G}\!\!\!\!\!
       (-1)^{\vert H\vert -1}(\lambda_{G^{}_{\gamma}(H)}^{\tau} - 
\lambda_{\varnothing}^{\tau})f(T,H) \\
  &= \sum_{\gamma\in H\subseteq G} (-1)^{\vert H\vert - 1}
     \bigl(\lambda_{G^{}_{\gamma^{}}(H)}^{\tau} - 
         \lambda_{\varnothing}^{\tau}\bigr)f(T,H) \, .
 \end{split}
\]
The remaining case $\gamma=\alpha_{k+1}$ is analogous to $\gamma=\alpha_1$.
\qed
\end{proof}
Now that we have an explicit formula for the probability of our
tree topologies, 
let us further comment on its structure.
\begin{remark}
Using the subtree decomposition of Def.~\ref{defin:subtrees},
we can  state \eqref{fth} alternatively as
\begin{equation}\label{fthnew}
 f(T,H) = \prod_{\alpha\in H}\prod_{\beta\in G^{}_{\alpha}(H)} g(T^{}_{\beta}(H)) \,,
\end{equation}
which implies some kind of independence across subtrees.
\end{remark}

Returning to the segmentation process, we may conclude
that 
\[
\mathbb{P}(F_{\tau} = G) = \sum_m \PP(\tilde F_{\tau}=(G,m)),
\] 
where the sum is
over all mappings consistent with a full binary tree.
The final result then follows directly from Theorem~\ref{thm:segisa}.
\begin{corollary}[Solution of recombination equation via ARTs]\label{coro:exsol}
 The discrete-time recombination equation \eqref{rekooper} has the solution
\[
p^{}_t     =  \sum_{G\subseteq L} a^{}_G ( t ) R^{}_G 
( p^{} _0 ),
\] 
where
\begin{equation}\label{exsol}
 a^{}_G(t) = 
\sum_{m}\PP(\tilde F_t=(G,m))  
\end{equation}
for all $G\subseteq L$, where the sum is over all tree-consistent $m$,
with
$\PP(\tilde F_t=(G,m))=\mathbb{P}_t(T)$ as given in Theorem~{\rm\ref{thm:treeprob}}. \qed
\end{corollary}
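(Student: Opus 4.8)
The plan is to obtain the corollary by merely combining the results already in place: it is essentially Theorem~\ref{thm:segisa} read through the augmented segmentation process, with $\PP(\widetilde F_t = (G,m))$ then supplied in closed form by Theorem~\ref{thm:treeprob}. First I would record that the deterministic solution is necessarily of the form $p_t = \sum_{G\subseteq L} a_G(t)\,R_G(p_0)$ with $\sum_G a_G(t) = 1$ and $a_G(t)\geqslant 0$: this is the ansatz~\eqref{ansatzsol} proved in \cite{discretereco}, and it is also available self-containedly here by comparing Prop.~\ref{prop:IPL} with Theorem~\ref{thm:typedistr} and matching coefficients of the (generically extremal) vectors $R_G(p_0)$, exactly the argument used in the first proof of Theorem~\ref{thm:segisa}. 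This reduces the statement to the identity $a_G(t) = \sum_m \PP(\widetilde F_t = (G,m))$.

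By Theorem~\ref{thm:segisa} we have $a_G(t) = \PP(F_t = G)$, so it suffices to show $\PP(F_t = G) = \sum_m \PP(\widetilde F_t = (G,m))$, the sum running over all tree-consistent $m\colon G \to G\cup\{r\}$. For this I would check that $\{\widetilde F_\tau\}$ is a genuine refinement of $\{F_\tau\}$: every realisation of $\{F_\tau\}_{0\leqslant \tau\leqslant t}$ with $F_t = G$ induces a unique tree topology $(G,m)$ --- take $\gamma$ to be the first link cut out of all of $L$, set $m(\gamma) = r$, and for $\alpha \neq \gamma$ let $m(\alpha)$ be the link whose cut last subdivided the ancestral material so as to produce the segment $I_\alpha$ from which $\alpha$ is subsequently cut. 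Conversely, every tree-consistent $m$ with internal-node set $G$ is realised (the left--right placement of children being built into tree-consistency), and distinct $m$ correspond to disjoint events; hence the events $\{\widetilde F_t = (G,m)\}$ partition $\{F_t = G\}$, which gives the displayed sum. Substituting the explicit expression $\PP(\widetilde F_t = (G,m)) = \mathbb{P}_t(T)$ from Theorem~\ref{thm:treeprob} then finishes the proof.

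The only real work is in the middle step, i.e.\ verifying that the passage from a sample path of $\{F_\tau\}$ to a tree topology is well-defined and surjective onto the tree-consistent mappings. The points to be careful about are that several links may be cut in one time step (necessarily on different segments), so the tree order $\preccurlyeq$ has to be read off from segment containment rather than from the time index; that segments are contiguous as subsets of $L$ but need not be contiguous as subsets of $S$, so the flanking-cut recipe for $m(\alpha)$ must be phrased in terms of links; and that, after contracting the degree-one internal nodes, the structure one obtains is always a full binary tree topology with no loss of information about $F_t$. This is combinatorial bookkeeping already implicit in the definition of the augmented process and in the remark preceding the corollary, so I anticipate no genuine difficulty: the quantitative content has been fully discharged by Theorems~\ref{thm:segisa} and~\ref{thm:treeprob}.
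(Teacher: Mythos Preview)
Your proposal is correct and matches the paper's own argument: the paper also derives the corollary directly from Theorem~\ref{thm:segisa} together with the observation (stated just before the corollary) that $\PP(F_\tau=G)=\sum_m \PP(\widetilde F_\tau=(G,m))$, with Theorem~\ref{thm:treeprob} supplying the explicit tree probabilities. Your middle paragraph makes explicit the bookkeeping the paper leaves implicit; note only that in the single-crossover setting the segments $I\in\mathcal L_G$ are always contiguous by construction, so your caveat about non-contiguous segments is unnecessary here.
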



\section{Discussion}
\label{sec:discussion}
The piece of research presented here has solved the
long-standing deterministic single-crossover dynamics forward in time
by considering the corresponding stochastic process (the Wright-Fisher
model with recombination) backward in time, via looking at single individuals
and tracing back their ancestries. We will first compare the result
with the previous recursive approaches 
and then turn to the connection with the ancestral 
recombination graph (ARG; the usual approach
to recombination in \emph{finite} populations, see 
\citealt[Chap.~7.2]{Wakeley} and
\citealt[Chap.~3.4]{Durrett}).

Evaluating the coefficients $a^{}_G(t)$  via the traditional recursive
approaches is an algebraic strategy, which relies on
linearisation and diagonalisation of the underlying (forward, deterministic)
dynamical system. In contrast, the ART approach presented here starts from
a summation over all paths of the (backward, stochastic) process that
give rise to a given set of segments after $t$ generations; this is
reflected in the sum over all ultrametric trees, as in
\eqref{ex13eq}. This formula  contains
sums over all tree toplogies \emph{and} over all combinations of branch
lengths, which is not useful in itself, in particular for large $t$.
The simplification obtained here consists in carrying out the 
summation over the branch lengths, so that one is left with the tree topologies
only. This is particularly useful for large $t$ and small recombination
probabilities, since long branches (where nothing happens) are `contracted'
in this way.

Both the recursive and the ART solution are of similar computational complexity.
Wheter  $a^{}_G(t)$ is evaluated via ARTs
(Corollary~\ref{coro:exsol} and Theorem~\ref{thm:treeprob}) or by solving 
 the recursions (e.g., \citealt[Eqs.~(43)--(45)]{discretereco} or the
related ones  of \citealt{Kevin1,Kevin2}), the effort grows exponentially
with $n:=\lvert G \rvert$.  In  the recursions,
the complexity comes from  multiple sums over nested sets of subsets of $G$.
In the case of the ARTs, it is
due to the  summation over all possible tree
topologies with internal node set $G$; their number grows exponentially
with $n$. To be precise, there are $C(n)$ such topologies
(\citealt[Chap.~3.4]{GrossYellen}; \citealt[Ex.~6.19.d]{Stanley}),
where $C(n)$ is the $n$'th Catalan number. After all,
clever  algorithms are available for the generation and enumeration of
these trees \citep{Gupta,Proskur,Zaks}. 

The ART formula is therefore not superior in computational terms.
However, it bears the great advantage to  relate to objects with 
an immediate meaning in terms of the
underlying process. After all, the probabilities for the
tree topologies may lend themselves to future use if, for example,
one is interested in the distribution of tree shape(s), or if
mutation is included in the model (that is, superimposed on the trees).
This is in contrast with the
manifestly non-intuitive recursions, for which we are not aware of
an interpretation in terms of the underlying process.

The backward approach that gives rise to the ARTs differs from the ARG
in two ways. First, we let $N$ tend to infinity without
rescaling any parameters; that is, recombination probabilities
remain constant when $N \to \infty$. This corresponds to the assumption
that recombination is so strong (that is, loci are so far apart)
that the majority of the recombination events takes place before
coalescence sets in. In contrast, the ARG assumes weak recombination
(in that recombination parameters scale inversely with population size),
so that recombination and coalescence take place on the same time scale.
Second, we focus on the ancestry of single individuals rather than of
samples, which further simplifies matters. As a reward, one obtains
semi-explicit answers for all quantities of interest here.

The differences between the scopes  of the two approaches 
can be  illustrated nicely
in the context of the paper by  \cite{WiufHein}, who analyse 
the ancestry of the genetic material of an entire chromosome from
a single individual. 
More precisely, they investigate the partitioning process
(a  close relative of our $\{\Sigma_{\tau}\}_{\tau \geqslant 0}$) 
\emph{at stationarity}, i.e., for $\tau \to \infty$, 
in the diffusion limit. They employ
approximations and simulations and focus on the concrete example of
the human chromosome 1, with realistic estimates of the 
recombination parameters and the
effective population size. 
They do find large contributions from unordered partitions,
in the sense of the frequent occurrence of
segments of ancestral material interspersed with
nonancestral (`trapped') material between them.
This is a consequence of the
large number of genetic ancestors of the chromosome (estimated at 6800)
relative to the effective population size ($N_e=20000$).
(For loci at opposite ends of the chromosome, the diffusion limit
is, however, not expected to yield a good approximation, see below.)

In contrast, our approach does not aim at any stationary situation.
Rather, it should provide a faithful picture in situations
where recombination rates and population size are large enough so that
there is a time horizon governed by recombination alone.
The detailed time course of  individual ancestries 
over this time horizon
will then be described by the segmentation process.
Let us note that a well-known method of  simulating the ARG (the so-called
sequential Markov coalescent by 
\citealt{McVean}) also uses an
approximation of the partitioning process by the segmentation
process in that  coalescence of sequences that both carry 
ancestral material are neglected; the authors demonstrate that this
yields a good approximation to the full ARG over a wide range of parameters.

To be more precise, we stipulate that there are, in fact, three
scaling regimes to be considered in the context of recombination:
weak recombination, strong recombination, and free recombination.
We can certainly not delineate them precisely here (and they are
also expected to overlap). However, with the usual assumption 
of a recombination probability of the order of $10^{-8}$ per
generation and  base pair of a DNA sequence \citep{Kauppi},
it is clear that
sites at a distance of the order of $10^3$--$10^4$ base pairs
will fall into the weak recombination regime:  
The recombination probability between them, of $10^{-5}$--$10^{-4}$,
is of the same order as the
coalescence probability of $1/N$ (between any pair of branches per generation).
In contrast, sites at a distance of $10^8$ base pairs (like the opposing
ends of a chromosome) will be essentially independent since there
will be, on average, one crossover between them  in every
generation; this is the case of free recombination. Between the
extremes, at a distance of $10^6$ base pairs, say, the recombination
probability of $10^{-2}$ between a pair of sites is well-separated from
the coalescence probability of $1/N$ per pair of branches
for all but the smallest populations;
this should be a case for strong recombination over a time horizon
where the number of branches is not too large. Note that the
`number of branches' that counts here is the number of genetic ancestors
of an individual (that is, those that carry ancestral material),
not the number of genealogical ancestors (that is, all parents, grandparents
and so on). The number of genetic ancestors only increases roughly
linearly with (backward) time $\tau$, whereas an individual has
up to $2^{\tau}$ genealogical ancestors $\tau$ generations back; see the
discussions by \cite{Donelly} and \cite{RalphCoop}. Note also that, in
this parameter regime, the  assumption of at most one crossover
per generation is well justified,
while recombination is still strong relative to coalescence. Last not least,
it is worth mentioning that this is the parameter regime where the deterministic
dynamics yields a valid  description.

The results presented here should also pave the way towards
a solution of the  multiple crossover model. Biologically,
this is relevant when more distant loci are considered. 
Recombination in a given generation will again be described 
by a partition
of $S$ into two parts (corresponding to the two parents), but, 
this time, these partitions
may be  arbitrary (as opposed to the ordered partitions that arise
due to single crossovers). 
But we  expect that the corresponding ancestral recombination
trees  will continue to be binary trees
whose subtrees are conditionally independent, so that the methods
developed here may be generalised to this case involving arbitrary
partitions.

\begin{acknowledgements}
It is our pleasure to thank the `recombination seminar'
(M.~Baake, C.~Huck, T.~Hustedt, P.~Zeiner) for thorough discussion of all stages
of this research, and A.~Wakolbinger for an enlightening conversation.
We are grateful to M.~Baake, K.~Schneider, M.~Esser, and S.~Probst for critically
reading  the manuscript, and to an anonymous referee for valuable
suggestions. This work was supported by Deutsche
Forschungsgemeinschaft (DFG) in the framework of
the Research Training Group `Bioinformatics' at Bielefeld University
(GK 635), and of the Priority Programme `Probabilistic Structures in
Evolution', SPP 1590 (BA1070/12-1 and BA2469/7-1).
\end{acknowledgements}

\bibliographystyle{spr-chicago}      
\bibliography{example}   
\nocite{*}


\end{document}